\documentclass[11pt, reqno]{amsart}
\numberwithin{equation}{section}
\linespread{1.10}

\usepackage{dsfont}
\usepackage{amscd,amsmath,amssymb,amsfonts}
\usepackage{tikz,color}
\usepackage{hyperref}
\usepackage{enumerate}
\usepackage{latexsym}
\usepackage{cases}
\usepackage{amsthm}
\usepackage{graphicx}
\usepackage{verbatim}
\usepackage{mathrsfs}
\usepackage{geometry}
\geometry{left=3.3cm,right=3.3cm,top=3.8cm, bottom=2.9cm}
%\usepackage{fullpage}
% page size

\newcommand{\R}{\mathbb{R}}

\newtheorem{theorem}{Theorem}[section]

\newtheorem{lemma}[theorem]{Lemma}

\newtheorem{conjecture}[theorem]{Conjecture}
\newtheorem{proposition}[theorem]{Proposition}

\theoremstyle{definition}
\newtheorem{definition}[theorem]{Definition}
\newtheorem{remark}[theorem]{Remark}

\makeatletter
\newcommand{\Extend}[5]{\ext@arrow0099{\arrowfill@#1#2#3}{#4}{#5}}
\makeatother

\begin{document}
\title[Restriction estimate]{Restriction estimates for hyperbolic paraboloids in higher dimensions}

\author[Z. Li]{Zhuoran Li}
	\address{Department of Mathematics, Taizhou University, Taizhou, 225300, China}
	\email{lizhuoran18@gscaep.ac.cn}

\begin{abstract}
In this paper, we prove restriction estimates for hyperbolic paraboloids in dimensions $n \geq 5$ by the polynomial partitioning method.
\end{abstract}

\maketitle

\begin{center}
\begin{minipage}{120mm}
   {\small {{\bf Key Words:} Restriction estimate, hyperbolic paraboloid, polynomial partitioning.}}\\
    {\small {\bf AMS Classification: 42B10}}
\end{minipage}
\end{center}

%%%%%%%%%%%%%%%%%%%%%%%%%%%%%%%%%%%%%%%%%%%%%%%%%%%%%%%%%%%%%%%%%%%%%%%%%%%%%%%%%%%%%%%%%%%%%%%%%%%%%%%%%%%%%%%%%%%%%%%%%%%%%%%%%%%%%%%%%%

%%%%%%%%%%%%%%%%%%%%%%%%%%%%%%%%%%%%%%%%%%%%%%%%%%%%%%%%%%  section 1  %%%%%%%%%%%%%%%%%%%%%%%%%%%%%%%%%%%%%%%%%%%%%%%%%%%%%%%%%%

%%%%%%%%%%%%%%%%%%%%%%%%%%%%%%%%%%%%%%%%%%%%%%%%%%%%%%%%%%%%%%%%%%%%%%%%%%%%%%%%%%%%%%%%%%%%%%%%%%%%%%%%%%%%%%%%%%%%%%%%%%%%%%%%%%%%%%%%%%

\section{Introduction and main result}

\noindent

Let $B^{n-1}(0,1)$ be the unit ball centered at the origin in $\mathbb{R}^{n-1}$. We define the Fourier extension operator
\begin{equation}\label{extensionoperator}
E_Mf(x):= \int_{B^{n-1}(0,1)}f(\xi)e[x_1\xi_1+\cdot\cdot\cdot+x_{n-1}\xi_{n-1}+x_n(M\xi \cdot \xi)]d\xi,
\end{equation}
where $x=(x_1,\cdots,x_n)\in\R^n,\;\xi=(\xi_1,\cdot\cdot\cdot,\xi_{n-1})\in \mathbb{R}^{n-1},\;e(t):= e^{it}$ for $t\in \mathbb{R}$ and
\begin{equation*}
M=\left(
  \begin{array}{cc}
    I_{n-1-m} & O \\
    O & -I_{m} \\
  \end{array}
\right).
\end{equation*}
We may assume $m = \min \{n-1-m, m\}$ since otherwise we can replace $x_n$ by $-x_n$.

E. M. Stein \cite{Stein} proposed the restriction conjecture in the 1960s. The adjoint form of the restriction conjecture can be stated as follows.
\begin{conjecture}\label{adjointconj}
For any $p>\frac{2n}{n-1}$ and $p\geq \frac{n+1}{n-1}q'$, there holds
\begin{equation}\label{conj:global}
\Vert E_Mf \Vert_{L^p(\mathbb{R}^n)}\leq C_{p,q}\Vert f \Vert_{L^q(B^{n-1}(0,1))}.
\end{equation}
\end{conjecture}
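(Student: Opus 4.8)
\medskip

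\noindent\textit{Proof strategy.}
The plan is to prove Conjecture~\ref{adjointconj} by the polynomial partitioning method of Guth, adapted to the indefinite Hessian $M$, running an induction on the radius $R$. By Tao's $\eps$-removal argument and real interpolation it suffices to establish, for every $\eps>0$ and every $p>\tfrac{2n}{n-1}$, the local estimate
\begin{equation}\label{localrest}
\|E_M f\|_{L^p(B_R)}\le C_\eps\,R^\eps\,\|f\|_{L^p(B^{n-1}(0,1))}\quad\text{for all }R\ge 1.
\end{equation}
Indeed, $\eps$-removal then upgrades \eqref{localrest} to the global bound $\|E_M f\|_{L^p(\R^n)}\lesssim_p\|f\|_{L^p}$ for all $p>\tfrac{2n}{n-1}$, and interpolating this family with the trivial estimate $\|E_M f\|_{L^\infty(\R^n)}\lesssim\|f\|_{L^1(B^{n-1}(0,1))}$ and with the Stein--Tomas estimate $\|E_M f\|_{L^{2(n+1)/(n-1)}(\R^n)}\lesssim\|f\|_{L^2(B^{n-1}(0,1))}$, together with H\"older's inequality on the fixed ball $B^{n-1}(0,1)$, covers the whole admissible $(p,q)$ range in \eqref{conj:global}. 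To attack \eqref{localrest} one covers $B^{n-1}(0,1)$ by $R^{-1/2}$-caps $\tau$, writes $E_M f=\sum_\tau E_M f_\tau$, and decomposes each $E_M f_\tau$ into wave packets concentrated, up to rapidly decaying tails, on tubes of dimensions $R^{1/2}\times\cdots\times R^{1/2}\times R$ pointing conormal to the graph of $\xi\mapsto M\xi\cdot\xi$ over $\tau$; the affine parabolic rescaling adapted to a $\rho^{-1/2}$-cap returns this graph to one of the same form after an affine change of the ambient coordinates, so the class $\{E_M\}$ is stable under the induction.

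\smallskip

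Next I would run the Bourgain--Guth broad--narrow dichotomy at scale $R$ with a large separation parameter $K$: at $x\in B_R$ either a single $K^{-1}$-cap dominates $|E_M f(x)|$ --- the \emph{narrow} case, absorbed by the inductive hypothesis after parabolic rescaling to radius $R/K^2$, at the cost of a $\log R$ factor from summing over narrow caps --- or $|E_M f(x)|$ is comparable to a $k$-broad quantity built from $k$ quantitatively transverse caps. For the $k$-broad part, polynomial partitioning supplies a polynomial $P$ of degree $\lesssim R^\delta$ whose zero set $Z(P)$ equidistributes the relevant integral over $\sim R^{\delta n}$ cells. The \emph{cellular} contribution is summed cell by cell --- by B\'ezout each cell meets only $O((\deg P)^{n-1})$ wave packets from a fixed $\tau$, giving a gain that is admissible after $O(\log R)$ iterations --- while the \emph{transverse} contribution, near many sheets of $Z(P)$, is controlled by the Bennett--Carbery--Tao multilinear Kakeya inequality. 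Both of these ingredients are insensitive to the signature of $M$: multilinear Kakeya uses only transversality and the cellular step is combinatorial, so they carry over to the hyperbolic case unchanged. What remains is the \emph{algebraic} contribution, where a secondary polynomial partitioning inside $Z(P)$ reduces matters to a function whose wave packets all lie in the $R^{1/2}$-neighborhood of a variety $Z$ of bounded degree and some dimension $1\le k\le n-1$.

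\smallskip

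The hard part --- and the step I expect to be the main obstacle --- is precisely this algebraic, lower-dimensional case, and it is where the indefiniteness of $M$ interferes. In the elliptic setting one concludes via a transverse-equidistribution estimate: wave packets tangent to $Z$ are forced to spread out in the $R^{1/2}$-directions transverse to $Z$, because the conormals of the surface over $R^{-1/2}$-caps sweep out a nondegenerate cone. For a hyperbolic $M$ the second fundamental form has signature $(n-1-m,m)$, so the surface \emph{contains} an $m$-dimensional affine subspace through every point; when $Z$ (nearly) aligns with such a null subspace, the conormal map degenerates, the tangent wave packets need not equidistribute, and the accompanying square-function / C\'ordoba $L^2$ inequality loses powers of $R$. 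This loss is exactly what forces the known treatments of the hyperbolic problem to close only for $p>\bar p(n,m)$ with $\bar p(n,m)>\tfrac{2n}{n-1}$. To reach the conjectured exponent I would add an induction on the dimension $n$ and exploit the block structure of $M$: write $M\xi\cdot\xi$ as its positive-definite part in the first $n-1-m$ coordinates plus its negative-definite part in the last $m$ coordinates, split the $R^{1/2}$-neighborhood of $Z$ according to how it projects onto the two blocks, bound the genuinely flat (null-direction) pieces by the restriction estimate \eqref{conj:global} in dimension $n-1$ --- supplied by the dimensional induction, peeling off one isotropic direction at a time --- and treat the transverse pieces, which still carry curvature, as in the elliptic argument. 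Feeding this lower-dimensional input back into the partitioning recursion and balancing it against the cellular and transverse terms so that all the losses cancel and the induction closes at $p>\tfrac{2n}{n-1}$ rather than at some strictly larger $\bar p(n,m)$ is the crux; it is here that the precise structure of $M$, and the normalization $m=\min\{n-1-m,m\}$, must be exploited decisively.
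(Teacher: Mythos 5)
The statement you were asked to prove is Conjecture~\ref{adjointconj}, not a theorem of the paper. The paper explicitly states that this conjecture ``is open'' for $n\geq 3$, and what the paper actually proves is the much weaker Theorem~\ref{mainprop0}, namely the estimate \eqref{equ:goalformprop0} in the restricted range $p>\max\{\tfrac{2(n^2+2n-2)}{n^2-1},\tfrac{2(n-m)}{n-m-1}\}$ for $n\geq 5$. There is no proof of Conjecture~\ref{adjointconj} in the paper, and no comparison to make: the task as posed has no ground truth inside the paper.

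Your proposal does not constitute a proof either, and you essentially say so yourself. The writeup correctly outlines the broad--narrow dichotomy and polynomial partitioning machinery, and correctly identifies the central obstruction in the hyperbolic setting: when the partitioning variety $Z$ aligns with a null $m$-plane of the second fundamental form, transverse equidistribution fails and the $L^2$ estimate on tangent wave packets degrades. But the last paragraph, which is supposed to overcome this --- block-splitting $M\xi\cdot\xi$, peeling off isotropic directions by a dimensional induction, and ``balancing \dots so that all the losses cancel and the induction closes at $p>\tfrac{2n}{n-1}$'' --- is not an argument; it is a wish. No quantitative accounting is given for how the cellular/transverse gains absorb the loss from the narrow $m$-dimensional slab, and in fact the paper's own handling of that slab (flat decoupling, Proposition~\ref{prop:decoupling}, applied to $\mathcal{A}_s$) inevitably produces the exponent $p_{\mathrm{narrow}}=\tfrac{2(n-m)}{n-m-1}>\tfrac{2n}{n-1}$ whenever $m\geq 1$, which is exactly why Theorem~\ref{mainprop0} stops short of the conjectured range. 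A genuine proof would need a new idea to beat flat decoupling on the null slab; merely invoking a dimensional induction does not supply one, because the restriction of $E_M$ to a neighborhood of a null $m$-plane is not an $(n-1)$-dimensional extension operator of the same type but a lower-dimensional surface with no curvature in the flat directions, for which the conjectured bound in dimension $n-1$ gives no improvement over Cauchy--Schwarz. So the gap you flag as ``the crux'' remains a gap, and the proposal proves strictly less than the paper does.

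Two smaller remarks. First, within your reduction you interpolate the $L^p\to L^p$ local estimate with Stein--Tomas and the $L^1\to L^\infty$ bound to try to cover the full $(p,q)$ range; in the hyperbolic case Stein--Tomas is still available, but the claim that these three families ``cover the whole admissible $(p,q)$ range'' needs checking against the constraint $p\geq\tfrac{n+1}{n-1}q'$ and is not automatic. Second, your cellular step invokes ``multilinear Kakeya'' in Guth's style, whereas this paper (following Barron) runs a \emph{bilinear} scheme built on Lee's and Vargas' bilinear restriction estimate for indefinite surfaces together with the geometric Lemma~\ref{lem:geometricLemma}; that is a substantively different engine, and the broad norm $\Vert\cdot\Vert_{HL^p_A}$ defined via affine $m$-planes in \eqref{pmass} is tailored to it. Your sketch does not mention this bilinear route at all.
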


The conjecture in $\mathbb{R}^2$ was proved by Fefferman \cite{F70} and Zygmund \cite{Z74} independently. In $\mathbb{R}^n\;(n\geq 3)$ the conjecture is open. In the case $m=0$, we refer to \cite{Bo91, Tao03, BG, Guth1, Guth2, Wang, D19, HR, HZ, WW22, GuoWangZhang22} for some partial progress. For $1\leq m \leq \frac{n-1}{2}$, we refer to \cite{Vargas05, Lee06, CL17, Stova19, HI, Barron, BMV20a, GO20, LiZheng23} and the references therein.

By the standard $\varepsilon$-removal argument in \cite{BG, Tao99}, Conjecture \ref{adjointconj} can be reduced to the following local version.
\begin{conjecture}[Local restriction conjecture]\label{conj:local}
Let $p\geq \frac{2n}{n-1}$ and $p \geq \frac{n+1}{n-1}q'$. Then, for any $\varepsilon>0$, there exists a positive constant $C(\varepsilon)$ such that
\begin{equation}\label{equ:parloc}
  \| E_Mf \|_{L^p(B_R)}\leq C(\varepsilon)R^{\varepsilon}\|f\|_{L^q(B^{n-1}(0,1))},
\end{equation}
where $B_R$ denotes an arbitrary ball of radius $R$ in $\mathbb{R}^n$.
\end{conjecture}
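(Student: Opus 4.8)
The plan is to establish \eqref{equ:parloc}, in the range of exponents $(p,q)$ accessible by Guth's polynomial partitioning method adapted to quadratic hypersurfaces of mixed signature (following the circle of ideas in \cite{HI, GO20, BMV20a}), which for $n\ge 5$ produces a non-trivial range. After a parabolic rescaling, and by interpolating the resulting $q=\infty$ estimate with the Stein--Tomas inequality for the hyperbolic paraboloid (valid since its Gaussian curvature does not vanish) and the trivial $L^1\to L^\infty$ bound, it suffices to prove the $L^2$-normalized estimate
\begin{equation*}
\|E_M f\|_{L^p(B_R)}\le C(\varepsilon) R^{\varepsilon}\|f\|_{L^2(B^{n-1}(0,1))},\qquad p\ge p(n,m),
\end{equation*}
for a suitable threshold $p(n,m)$, so the real tasks are to prove such an inequality and to quantify $p(n,m)$.

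First I would fix the scale $R$ and carry out the wave-packet decomposition $E_M f=\sum_{T\in\mathbb{T}}E_M f_T$, where $\mathbb{T}$ is a family of tubes of dimensions $\sim R^{1/2+\delta}\times\cdots\times R^{1/2+\delta}\times R$ --- all principal curvatures of the hyperbolic paraboloid have absolute value $\sim 1$, so the wave packets are genuine tubes rather than plates --- with $E_M f_T$ concentrated on $T$, Fourier-supported in an $R^{-1/2-\delta}$-cap whose Gauss image is the long direction of $T$, and $\sum_T\|f_T\|_2^2\approx\|f\|_2^2$. Next, a Bourgain--Guth broad--narrow decomposition \cite{BG} at a large constant scale $K$: covering $B^{n-1}(0,1)$ by $K^{-1}$-caps, at each point $E_M f$ is either controlled by the $k$-broad norm at scale $K$ (the maximum over $k$-tuples of quantitatively transverse caps), or it is ``narrow'', concentrated over caps lying in the $K^{-1}$-neighbourhood of a $d$-dimensional affine subspace $V$ with $d\le n-1$. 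On the narrow part, after an affine rescaling $E_M f$ becomes the extension operator associated with the graph of the quadratic form $M|_V$, a quadratic form in $d$ variables of some (possibly degenerate) signature; this is handled by the inductive hypothesis in dimension $\le n$ together with an $L^2$-orthogonality step that sums the narrow contributions without loss, the $K^{O(1)}$ loss being absorbed by a small power of $K^{-1}$ gained from transversality. The broadness parameter one can afford, $k=k(n,m)$, grows with the number $m$ of negative eigenvalues, which is precisely why the hyperbolic range is worse than the elliptic one.

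The core is then the $k$-broad estimate, proved by a triple induction --- on $R$, on the ambient dimension $n$, and on the dimension of an algebraic variety in which the wave packets are trapped. Apply polynomial partitioning to the $k$-broad norm: there is a polynomial $P$ of degree $\lesssim D$ whose zero set $Z(P)$ partitions $B_R$ into $\sim D^n$ cells carrying comparable broad mass. In the \emph{cellular} case each cell meets only an $\sim D^{-(n-1)}$-fraction of the tubes (up to the $O(D)$ overlap) and the wave packets redistribute with total $L^2$ mass $\lesssim D\|f\|_2^2$, so applying the inductive hypothesis on each of the $\sim D^n$ cells and summing closes the induction on $R$ once $p$ is large enough; this constraint is mild. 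In the \emph{algebraic} case the broad mass concentrates in the $R^{1/2+\delta}$-neighbourhood of $Z(P)$; splitting $\mathbb{T}$ into tubes transverse to $Z(P)$ and tubes tangent to it, the transverse tubes are handled by a bush/$L^2$ argument (a transverse tube meets the neighbourhood in $\lesssim D$ disjoint $R^{1/2}$-balls, each of which lies in $O(1)$ transverse tubes), while the tangential tubes are handled by nested polynomial partitioning on $Z(P)$, invoking the polynomial Wolff axioms --- via Guth--Zahl/Wongkew-type bounds on neighbourhoods of low-degree varieties --- to control the number of tangent tubes through a point, and running the induction on the dimension of the variety down from $n-1$. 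All of these geometric inputs (tangency, transversality, incidences with varieties) are affine and hence insensitive to the signature of $M$, and the passage from the $k$-broad to a $k$-linear estimate uses the multilinear restriction theorem of Bennett--Carbery--Tao, which requires only transversality of the normals and is likewise signature-agnostic.

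I expect the main obstacle to be the narrow case for the hyperbolic paraboloid together with the exponent bookkeeping across the three inductions. For the elliptic paraboloid the restriction of the quadric to any affine subspace is again an elliptic paraboloid, so the induction on dimension is clean; for the hyperbolic paraboloid the form $M|_V$ can be of smaller signature or genuinely degenerate --- in the worst case flat along a null direction of $M$ --- and one must either show such adversarial narrow configurations carry negligible broad mass or feed in a separate, easier lower-dimensional (flat or cylinder) estimate for them while still summing the narrow pieces without loss; making the $L^2$-orthogonality in that sum cooperate with the mixed-signature geometry is delicate. Finally one must verify that the balance --- the mild gain in the cellular case against the losses from the transverse bush, the tangential Wolff input, and the narrow summation --- leaves a non-empty range $p\ge p(n,m)$ of new estimates precisely when $n\ge 5$; optimising $k(n,m)$ and the partitioning degree $D$ to pin down $p(n,m)$ is where the bulk of the technical work lies.
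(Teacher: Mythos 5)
The statement you have been asked to prove is Conjecture \ref{conj:local}, the \emph{full} local restriction conjecture for the hyperbolic paraboloid in the optimal range $p\geq \frac{2n}{n-1}$, $p\geq \frac{n+1}{n-1}q'$. The paper does not prove this statement: it is recorded as an open conjecture, obtained from Conjecture \ref{adjointconj} by the standard $\varepsilon$-removal reduction, and what the paper actually establishes is the partial result of Theorem \ref{localmainprop0}, valid only for $p\geq \max\{\frac{2(n^2+2n-2)}{n^2-1},\frac{2(n-m)}{n-m-1}\}$, a threshold strictly larger than $\frac{2n}{n-1}$. Your proposal shares this limitation and in effect concedes it in its first sentence (``in the range of exponents accessible by Guth's polynomial partitioning method'') and again at the end (``verify that the balance leaves a non-empty range $p\geq p(n,m)$''). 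That is a plan for a partial result, not a proof of the conjecture. None of the inputs you invoke --- polynomial partitioning, the broad--narrow decomposition, Bennett--Carbery--Tao multilinear restriction, the polynomial Wolff axioms --- is known to reach the endpoint $p>\frac{2n}{n-1}$ even in the elliptic case $m=0$, $n\geq 3$; and in the hyperbolic case the narrow contribution from caps clustering near an $m$-dimensional null plane of $M$ forces an extra constraint of the shape $p\geq \frac{2(n-m)}{n-m-1}$ (the paper's $p_{narrow}$, coming from flat decoupling on an essentially flat piece of the surface), which already exceeds $\frac{2n}{n-1}$ whenever $m\geq 1$. So the gap is at the most basic level: the argument you outline cannot close for the stated range of exponents, and you do not identify any mechanism that would.

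Separately, even for the partial range, your engine differs from the paper's in a way that matters for mixed signature. You propose $k$-broad norms controlled by the signature-agnostic multilinear restriction theorem; the paper instead defines a broad norm by deleting $A$ many $m$-dimensional affine subspaces, uses Barron's dichotomy (Lemma \ref{lem:geometricLemma}: either two strongly separated $K^{-1}$-caps exist, or all significant caps lie near a single $m$-plane), and feeds the separated pair into the Lee--Vargas \emph{bilinear} estimate for surfaces with curvatures of different signs. Transversality of normals alone does not supply the needed separation condition $|M(\xi-\eta)\cdot(\bar\xi-\bar\eta)|\gtrsim K^{-1}$ for the hyperbolic quadratic form, so ``signature-agnostic'' multilinearity cannot simply be substituted; any complete write-up would have to supply these signature-specific inputs.
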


In this paper, we study the restriction problem associated to the operator $E_{M}$. Our main result is the following.
\begin{theorem}\label{mainprop0}
Let $n \geq 5$. The estimate
\begin{equation}\label{equ:goalformprop0}
\Vert E_{M}f \Vert_{L^p(\mathbb{R}^n)}\leq C_p \Vert f \Vert_{L^{p}(B^{n-1}(0,1))}
\end{equation}
holds for $p>\max\{\frac{2(n^2+2n-2)}{n^2-1}, \frac{2(n-m)}{n-m-1}\}$.
\end{theorem}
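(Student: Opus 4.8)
The plan is to prove the local estimate
$\|E_Mf\|_{L^p(B_R)}\le C_\varepsilon R^\varepsilon\|f\|_{L^p(B^{n-1}(0,1))}$
for every $\varepsilon>0$ and every $p$ above the asserted threshold, and then to invoke the $\varepsilon$-removal argument of \cite{BG,Tao99}, which upgrades such an $R^\varepsilon$-loss bound on an open range of exponents to the global estimate \eqref{equ:goalformprop0}. Since $B^{n-1}(0,1)$ has finite measure, Hölder reduces matters to $f$ with $\|f\|_{L^\infty}\le1$; and parabolic rescaling, which preserves the family of quadrics $M\xi\cdot\xi$ up to the linear symmetries of $M$, lets us freely localize $f$ to subcaps. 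The whole argument runs by a nested induction — on the radius $R$, on the ambient dimension (with $m$ carried along), and on an auxiliary dimension parameter appearing in the partitioning step.

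The first ingredient is a wave packet decomposition combined with the Bourgain--Guth broad--narrow dichotomy. Tile $B^{n-1}(0,1)$ by $R^{-1/2}$-caps $\theta$ and write $f=\sum_\theta\sum_{T\parallel\theta}f_T$, so that each $|E_Mf_T|$ is essentially concentrated on a tube $T$ of dimensions $R^{1/2}\times\cdots\times R^{1/2}\times R$ pointing along the normal to $\{(\xi,M\xi\cdot\xi)\}$ over $\theta$. Fixing $K=K(\varepsilon)$, tiling by $K^{-1}$-caps $\tau$, and choosing a suitable $k\in\{2,\dots,n\}$ depending on $m$, one performs on each $R^{1/2}$-ball of $B_R$ the broad--narrow split: either $E_Mf$ is dominated there by a $k$-linear ($k$-broad) expression in quantitatively transverse caps $\tau$, or else it is concentrated on the caps lying in a $K^{-1}$-neighbourhood of some $(k-1)$-dimensional affine subspace $V$ of the frequency ball. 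In the narrow case one restricts the phase $x_n\,M\xi\cdot\xi$ to $V$, obtaining a quadratic form in fewer variables; thus $E_Mf_V$ is, up to a harmless factor, an extension operator for a lower-dimensional quadric in $\mathbb{R}^k$, and the inductive hypothesis in dimension $<n$ applies after iterating. The hyperbolic signature intervenes here: when $V$ aligns with the $m$-dimensional totally isotropic (flat) subspaces of $M\xi\cdot\xi$, the surviving quadric retains curvature in only $\sim n-m$ essential directions, and the method controls such contributions only for $p>\frac{2(n-m)}{n-m-1}$ — this is the origin of the second term in the theorem.

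The second and quantitatively decisive ingredient is the $k$-broad estimate $\|E_Mf\|_{\mathrm{BL}^p_{k,A}(B_R)}\le C_\varepsilon R^\varepsilon\|f\|_{L^\infty}$ in the range $p\ge\frac{2(n^2+2n-2)}{n^2-1}$, which I would establish by polynomial partitioning in the spirit of \cite{Guth2}. One selects a polynomial $P$ of degree $D=D(\varepsilon)$ whose zero set bisects the $k$-broad mass into $\sim D^n$ cells. In the cellular case each wave-packet tube meets only $O(D)$ cells, so the count of relevant tubes per cell drops by a factor $D^{n-1}$; $\ell^p$-summing the inductive bound over the $D^n$ cells and taking $D$ large closes the induction on $R$. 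In the algebraic case the $k$-broad mass concentrates in the $R^{1/2}$-neighbourhood of the hypersurface $Z(P)$; covering it by balls of an intermediate radius and splitting the wave packets into those transverse to $Z(P)$ — few of which can meet any one ball, so that they are handled by $L^2$-orthogonality — and those tangent to it — which effectively live on an $(n-1)$-dimensional set and are handled by decreasing the auxiliary dimension parameter — again closes the induction. The only genuinely oscillatory input is the Bennett--Carbery--Tao multilinear restriction inequality controlling the transverse interactions; as this requires only linear-algebraic transversality of the tangent planes, it holds for $M\xi\cdot\xi$ with constants uniform in the signature, so this step is insensitive to the hyperbolicity of the surface. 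Feeding the $k$-broad estimate into the broad--narrow recursion then yields the desired local estimate, and hence \eqref{equ:goalformprop0} after $\varepsilon$-removal, for $p>\max\{\frac{2(n^2+2n-2)}{n^2-1},\frac{2(n-m)}{n-m-1}\}$.

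The main obstacle is the interplay between the polynomial partitioning recursion and the flat $m$-dimensional pieces of the surface: wave packets may concentrate on these flat affine subspaces with no curvature gain, so both the narrow reduction and the tangential sub-case of the algebraic step must be arranged so that each dimensional descent costs \emph{at most} the exponent $\frac{2(n-m)}{n-m-1}$ of the residual curved directions, and never more. Verifying that the admissible exponent does not degrade further along the chain of reductions through dimensions $n-1,n-2,\dots,n-m$ — using that $m$ stays fixed while the ambient dimension decreases and that every intermediate quadric is again of the admissible form — and keeping the interacting inductions on $R$, on the dimension, and on the broadness level compatible, is the technical heart of the matter; the hypothesis $n\ge5$ is precisely what keeps the resulting numerology consistent.
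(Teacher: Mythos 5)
Your overall skeleton (local estimate plus $\varepsilon$-removal, broad--narrow dichotomy, polynomial partitioning with cellular/algebraic cases) matches the paper, but the two inputs that actually produce the claimed exponents are missing or replaced by machinery that does not work here. For the broad part, you invoke the Bennett--Carbery--Tao multilinear inequality and the Guth~II style descent in the dimension of the variety (``decreasing the auxiliary dimension parameter'' in the tangential sub-case). That descent rests on transverse equidistribution, which is tied to ellipticity and is precisely what breaks for hyperbolic signature; and in any case BCT-based numerology does not yield $p_{broad}=\frac{2(n^2+2n-2)}{n^2-1}$. The paper instead defines a broad norm that discards caps near $A$ many $m$-dimensional affine subspaces, uses Barron's geometric dichotomy (either all significant caps cluster near one such $m$-plane, or two caps are strongly separated in the sense $|M(\xi-\eta)\cdot(\bar\xi-\bar\eta)|\gtrsim K^{-1}$), and then runs a single level of partitioning in which the tangential case is closed by combining the Lee--Vargas bilinear estimate at $p=\frac{2(n+2)}{n}$, the trivial $L^2$ bound, and the Katz--Rogers polynomial Wolff axioms (tangential wave packets occupy $\lesssim R^{\frac{n-2}{2}+O(\delta)}$ caps). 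Interpolating these three is exactly where $\frac{2(n^2+2n-2)}{n^2-1}$ comes from; none of these ingredients appears in your argument, so the broad exponent is asserted rather than proved.

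The narrow case has a second genuine gap. You propose to restrict the phase to the subspace $V$ and induct on the ambient dimension through a chain $n-1,\dots,n-m$ of lower-dimensional quadrics. But the relevant $V$'s are the $m$-dimensional null (totally isotropic) subspaces of $M\xi\cdot\xi$, on which the restricted phase is affine: there is no lower-dimensional curved surface left and hence no restriction estimate to induct on. The paper handles this contribution by flat decoupling over the $\sim K_1^{m}$ caps covering the $K_1^{-1}$-neighborhood of $V$, followed by parabolic rescaling and induction on the scale $R$; the balance $K_1^{m(p-2)}\cdot K_1^{2n-(n-1)p}=K_1^{2(n-m)-p(n-m-1)}$ is exactly what forces $p\ge p_{narrow}=\frac{2(n-m)}{n-m-1}$. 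Your remark that ``the surviving quadric retains curvature in only $\sim n-m$ directions'' points at the right exponent but does not substitute for this computation, and the dimensional-descent mechanism you describe would stall at the first null plane. (The final passage from the mixed $\|f\|_{L^2}^{2/p}\|f\|_{L^\infty}^{1-2/p}$ bound to the $L^p\to L^p$ statement, via restricted strong type and interpolation as in Kim, is a minor point you also gloss over, but that is routine.)
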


\begin{remark}
Hickman-Iliopoulou \cite{HI} proved sharp $L^p$ estimates for H\"{o}rmander-type oscillatory integral operators of arbitrary signature, which implies that
\[\Vert E_{M}f \Vert_{L^p(\mathbb{R}^n)}\leq C_p \Vert f \Vert_{L^{p}(B^{n-1}(0,1))}\]
holds whenever $p$ satisfies
\begin{align*}
\begin{split}
p>\left\{
\begin{array}{ll}
\frac{2(3n-2m+1)}{3n-2m-3}, & \text{if}\;n\;\text{is}\;\text{odd},\\
\frac{2(3n-2m+2)}{3n-2m-2}, & \text{if}\;n\;\text{is}\;\text{even}.
\end{array}
\right.
\end{split}
\end{align*}
Theorem \ref{mainprop0} improves the above result of Hickman-Iliopoulou when $n\geq 5$ is odd and $m=\frac{n-3}{2}$.
\end{remark}

By the standard $\varepsilon$-removal argument in \cite{Tao99}, Theorem \ref{mainprop0} reduces to the following local version.
\begin{theorem}\label{localmainprop0}
Let $n \geq 5$. For every $\varepsilon >0$, there exists a positive constant $C_{\varepsilon}$ such that for any sufficiently large $R$
\begin{equation}\label{eq:localmainprop0}
\Vert E_Mf \Vert_{L^p(B_R)}\leq C_{\varepsilon}R^{\varepsilon}\Vert f \Vert_{L^{p}(B^{n-1}(0,1))}
\end{equation}
holds for $p\geq \max\{\frac{2(n^2+2n-2)}{n^2-1}, \frac{2(n-m)}{n-m-1}\}$ and any function $f\in L^{p}(B^{n-1}(0,1))$, where $B_R$ denotes an arbitrary ball of radius $R$ in $\mathbb{R}^n$.
\end{theorem}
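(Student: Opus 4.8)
The plan is to prove the local estimate \eqref{eq:localmainprop0} by Guth's polynomial partitioning method, adapted to the indefinite second fundamental form associated to $M$, running a double induction on the radius $R$ and --- for the lower-dimensional contributions --- on the dimension $n$. After a parabolic rescaling I reduce to $\|f\|_\infty\le 1$ and perform a wave packet decomposition at scale $R^{-1/2}$, writing $E_Mf=\sum_\theta E_Mf_\theta$ where $\theta$ runs over $R^{-1/2}$-caps and each $E_Mf_\theta$ is concentrated on a dual tube $T_\theta$ of dimensions $R^{1/2}\times\cdots\times R^{1/2}\times R$. Following the Bourgain--Guth scheme I split $B_R$ into $k$-broad points and points where $|E_Mf|$ is dominated by wave packets whose frequency supports cluster near some $k$-plane. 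The latter, narrow, contributions are grouped over a boundedly overlapping family of lower-dimensional plates and, after rescaling, estimated by restriction bounds for paraboloids in dimension $<n$, supplied by the induction together with the known results quoted above; the extreme case, where a plate is adapted to an $m$-dimensional isotropic subspace of $M$, is governed by the restriction problem for an elliptic paraboloid of dimension $n-1-m$ inside $\R^{n-m}$, and the Knapp example on such a subspace is exactly what forces the condition $p\ge\frac{2(n-m)}{n-m-1}$. This reduces matters to a $k$-broad estimate $\|E_Mf\|_{BL^p_{k,A}(B_R)}\lesssim_\varepsilon R^\varepsilon\|f\|_p$ for a suitable $k$ and for $p\ge\frac{2(n^2+2n-2)}{n^2-1}$.

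\emph{Polynomial partitioning: cellular and transverse cases.} To the broad norm I apply Guth's polynomial partitioning: there is a nonzero polynomial $P$ of controlled degree $D$ so that either a typical one of the $\sim D^n$ cells of $\R^n\setminus Z(P)$ carries a $\gtrsim D^{-n}$ fraction of $\|E_Mf\|_{BL^p_{k,A}(B_R)}^p$ (the cellular case), or most of this quantity lies in the $R^{1/2}$-neighbourhood $W$ of $Z(P)$ (the algebraic case). In the cellular case a B\'ezout bound shows that each $T_\theta$ meets $O(D)$ cells, so after grouping the wave packets accordingly and rescaling, the induction on $R$ closes this case with room to spare. In the algebraic case I cover $W$ by $R^{1/2}$-balls and, for each ball, split the wave packets meeting it into those tangential and those transverse to $Z(P)$; for the transverse packets a transversality count of polynomial Wolff type shows that only $O(D^{O(1)})$ of them pass through any given point, which is again absorbed by the induction on $R$.

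\emph{The tangential case, and the main obstacle.} The crucial case is the tangential one, in which the wave packets concentrate in the $R^{1/2}$-neighbourhood of the $(n-1)$-dimensional variety $Z(P)$. Here I iterate the polynomial partitioning on $Z(P)$ itself (the nested-partitioning, or ``grains'', scheme): successive partitions cut down the effective dimension and the recursion terminates in lower-dimensional restriction estimates, at which stage the exponent $\frac{2(n-m)}{n-m-1}$ is once more the operative threshold. The difficulty, and the reason for the hypothesis $n\ge5$, is that the indefinite signature of $M$ weakens the tangency condition: a variety can osculate the hyperbolic paraboloid along isotropic pieces of dimension up to $m$, so tangential wave packets overlap more than in Guth's elliptic argument. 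Establishing the appropriate Wolff-type axiom for the hyperbolic surface and feeding it into the nested induction without losing more than $R^\varepsilon$ is the main technical obstacle; it is what forces the exponent $\frac{2(n^2+2n-2)}{n^2-1}$, and it is also where the normalization $m=\min\{n-1-m,m\}\le\frac{n-1}{2}$ enters, guaranteeing that the isotropic pieces are at most half the base dimension.
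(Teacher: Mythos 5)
Your proposal reproduces the general architecture (wave packets, broad/narrow dichotomy, polynomial partitioning with cellular, transverse and tangential cases), but it leaves the decisive step unproved. In the tangential case you propose a nested partitioning on $Z(P)$ in the style of Guth's second paper and Hickman--Rogers, and then you explicitly concede that ``establishing the appropriate Wolff-type axiom for the hyperbolic surface and feeding it into the nested induction'' is ``the main technical obstacle.'' That obstacle \emph{is} the theorem: you never derive the exponent $\frac{2(n^2+2n-2)}{n^2-1}$, you only assert that this step ``forces'' it. The paper does something different and much shorter here: it uses a broad norm whose exceptional sets are $m$-dimensional affine subspaces of $\mathbb{R}^{n-1}$ (dictated by Barron's geometric dichotomy, Lemma \ref{lem:geometricLemma}), so that on the broad part one always finds two strongly separated caps and can invoke the Lee--Vargas bilinear estimate (Theorem \ref{thm:usefulbilinear0}). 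The tangential contribution is then handled in one step by interpolating the $L^2$ bound \eqref{L2energy} with the bilinear $L^{\frac{2(n+2)}{n}}$ bound \eqref{L2boundforhypertang} and inserting the Katz--Rogers polynomial Wolff axiom \eqref{keytangentestimate}; the exponent $p_{broad}$ falls out of this interpolation. None of these ingredients appears in your write-up, and without them (or a worked-out substitute) the broad estimate is missing.

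There is a second, independent problem in your narrow case. You claim the extreme narrow contribution ``is governed by the restriction problem for an elliptic paraboloid of dimension $n-1-m$ inside $\mathbb{R}^{n-m}$'' and that this yields the condition $p\ge\frac{2(n-m)}{n-m-1}$. But $\frac{2(n-m)}{n-m-1}$ is the \emph{conjectured} restriction exponent in dimension $n-m$, which is open for $n-m\ge3$; an induction on dimension can only supply the (strictly larger) exponents of the theorem itself in lower dimensions, so your route cannot reach $p_{narrow}$. The paper instead applies flat decoupling (Proposition \ref{prop:decoupling}) to the $\sim K_1^{m}$ caps near each subspace $V_s$, then parabolic rescaling and induction on \emph{scales}; the loss $K_1^{m(p-2)}$ from decoupling against the gain $K_1^{2n-(n-1)p}$ from rescaling closes precisely when $p\ge\frac{2(n-m)}{n-m-1}$. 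You should replace your dimensional-induction step by this decoupling-plus-rescaling argument, and supply the bilinear/polynomial-Wolff tangential argument, before the proof can be considered complete.
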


\vskip 0.2in

The paper is organized as follows. In Section 2, we review some basic tools. In Section 3, we discuss the broad norm adapted to hyperbolic paraboloids. In Section 4, we give the proof of Theorem \ref{localmainprop0}.

\vskip 0.2in

{\bf Notations:} For nonnegative quantities $X$ and $Y$, we will write $X\lesssim Y$ to denote the estimate $X\leq C Y$ for some large constant $C$ which may vary from line to line and depend on various parameters. If $X\lesssim Y\lesssim X$, we simply write $X\sim Y$. Dependence of implicit constants on the power $p$ or the dimension will be suppressed; dependence on additional parameters will be indicated by subscripts.
For example, $X\lesssim_u Y$ indicates $X\leq CY$ for some $C=C(u)$. We write $B^{d}(0,r)$ to denote the ball centered at the origin of radius $r$ in $\mathbb{R}^d$. We denote an arbitrary ball of radius $R$ in $\mathbb{R}^n$ by $B_R$. For any set $F \subset \mathbb{R}^d$, we denote the characteristic function on $F$ by $\chi_{F}$. Usually, the Fourier transform on $\mathbb{R}^d$ is defined by
\begin{equation*}
\widehat{f}(\xi):=(2\pi)^{-d}\int_{\mathbb{R}^d}e^{-ix\cdot \xi}f(x)\;dx.
\end{equation*}

\section{Basic tools}

In this section, we review the basic tools that we will need in the proof of Theorem \ref{localmainprop0}.

\subsection{Bilinear estimates}

We review some bilinear restriction estimates and a geometric lemma that characterizes what happens if these bilinear estimates fail. The following estimate was proved by S. Lee \cite{Lee06} in dimensions $n \geq 3$ and independently proved by Vargas \cite{Vargas05} in $\mathbb{R}^3$.

\begin{theorem}[\cite{Lee06, Vargas05}]\label{thm:bilinear0}  Suppose $f_1$ and $f_2$ are supported in open balls $\tau_1$ and $\tau_2$ of radius $\sim 1$. If
\begin{equation}\label{eq:bilinearCondition0}
\inf_{ \substack{ \xi, \bar{\xi} \in \tau_1 \\ \eta, \bar{\eta} \in \tau_2 } } | M(\xi - \eta) \cdot (\bar{\xi} - \bar{\eta})| \geq c > 0
\end{equation}
then
\begin{equation}\label{eq:bilnearEst0}
\| |E_Mf_1 E_Mf_{2}|^{\frac{1}{2}} \|_{L^{p}(B^n_R)} \leq C_{\epsilon}R^{\epsilon} \|f_1\|^{\frac{1}{2}}_{L^2} \|f_2\|_{L^2}^{\frac{1}{2}} \end{equation}
holds whenever $p \geq \frac{2(n+2)}{n}$.
\end{theorem}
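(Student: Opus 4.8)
The plan is to first trade the half-power for an ordinary bilinear bound: since $\||E_Mf_1 E_Mf_2|^{1/2}\|_{L^p(B^n_R)}^2=\|E_Mf_1\,E_Mf_2\|_{L^{p/2}(B^n_R)}$, the endpoint $p=\tfrac{2(n+2)}{n}$ corresponds to the product exponent $q_0:=\tfrac{n+2}{n}$. For larger $p$ I would interpolate with the trivial bound $|E_Mf_i(x)|\le\|f_i\|_{L^1}\lesssim\|f_i\|_{L^2}$ (valid because $|\tau_i|\sim1$), which gives $\||E_Mf_1E_Mf_2|^{1/2}\|_{L^\infty}\le\|f_1\|_{L^2}^{1/2}\|f_2\|_{L^2}^{1/2}$; interpolating the two endpoints leaves only the $R^\epsilon$ factor. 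So it suffices to establish
\begin{equation*}
\|E_Mf_1\,E_Mf_2\|_{L^{q_0}(B^n_R)}\le C_\epsilon R^\epsilon\|f_1\|_{L^2}\|f_2\|_{L^2},\qquad q_0=\tfrac{n+2}{n}.
\end{equation*}
I would also first record that \eqref{eq:bilinearCondition0}, via Cauchy--Schwarz and $\operatorname{diam}(\tau_1\cup\tau_2)\lesssim1$, forces $|M(\xi-\eta)|\gtrsim c$ for all $\xi\in\tau_1,\ \eta\in\tau_2$; since the unit normal to the graph of $M\xi\cdot\xi$ at $\xi$ is parallel to $(2M\xi,-1)$, this says exactly that the Gauss maps of the two caps are quantitatively non-parallel, i.e.\ the two pieces are transverse in the sense appropriate to $M$.

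\textbf{The two classical ingredients.} Next I would assemble the inputs driving an induction on the radius $R$. The first is the transversal bilinear $L^2$ estimate: by Plancherel, $\|E_Mf_1\,E_Mf_2\|_{L^2(\R^n)}^2$ is controlled by the convolution of the two surface measures, whose co-area Jacobian for the sum map $(\xi,\eta)\mapsto(\xi+\eta,\,M\xi\cdot\xi+M\eta\cdot\eta)$ computes to $\sim|M(\xi-\eta)|\gtrsim c$; hence $\|E_Mf_1\,E_Mf_2\|_{L^2(\R^n)}\lesssim_c\|f_1\|_{L^2}\|f_2\|_{L^2}$. The second is parabolic rescaling: the quadratic form $M\xi\cdot\xi$ is affine invariant, so for a $\rho$-cap $\tau\subset B^{n-1}(0,1)$ an affine change of variables (a shear in $x$ plus dilation of $x_n$) turns $E_M(f\chi_\tau)$ on a ball of radius $\rho^{-2}r$ into $E_M g$ on a ball of radius $r$, with the $L^2$ normalization and the transversality constant $c$ preserved up to absolute factors. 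Crucially, none of this sees the signature of $M$.

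\textbf{Induction on scales.} With these in hand I would run the Wolff--Tao bilinear machinery (carried out for indefinite $M$ by Lee and by Vargas, cf.\ \cite{Lee06,Vargas05}) by induction on $R$. Decompose $f_i=\sum_Tf_{i,T}$ into wave packets adapted to $R^{-1/2}$-caps, so that each $E_Mf_{i,T}$ is concentrated on a tube $T$ of dimensions $\sim R^{1/2}\times\cdots\times R^{1/2}\times R$ in the normal direction of that cap, and partition $B^n_R$ into $R^{1/2}$-balls $B$. On each $B$ one bounds $\|E_Mf_1\,E_Mf_2\|_{L^{q_0}(B)}$ by combining the small-scale transversal bilinear $L^2$ bound with the inductive hypothesis applied, after parabolic rescaling, to the individual $R^{-1/2}$-subcaps; the transversality \eqref{eq:bilinearCondition0} is what guarantees that a tube from $\tau_1$ and a tube from $\tau_2$ meeting a common $R^{1/2}$-ball are \emph{robustly} transverse, which supplies the tube-overlap/incidence control needed to sum over the balls $B$. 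A two-ends reduction rules out concentration of the mass of $E_Mf_i$ near a lower-dimensional set, and one bootstraps the admissible exponent from $R^{\alpha}$ to $R^{\alpha/2+\epsilon}$, iterating down to $R^\epsilon$.

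\textbf{Main obstacle.} The genuine difficulty, relative to the elliptic paraboloid, is that the hyperbolic paraboloid contains large families of line segments---and, in the directions matching the negative eigenvalues of $M$, flat affine subspaces---so no global curvature is available and every gain must be extracted from the transversal interaction of the two caps; this is precisely why the hypothesis must be the strong quantitative statement \eqref{eq:bilinearCondition0} rather than mere separation. Once that is assumed, the wave packets behave exactly as for $M=I_{n-1}$ and the parabolic rescaling is untouched, so the argument proceeds as in the elliptic case. The one point I expect to require care is the endpoint bookkeeping in the induction---keeping the loss at $R^\epsilon$ rather than a power of $R$---for which I would follow the scheme of \cite{Lee06,Vargas05} verbatim.
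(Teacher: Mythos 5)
The paper does not prove this theorem: it is imported verbatim as a black box from Lee \cite{Lee06} and Vargas \cite{Vargas05} (and its rescaled variant from \cite{Barron}), so there is no internal argument to measure yours against. Your preliminary reductions are correct: the identity $\||E_Mf_1E_Mf_2|^{1/2}\|_{L^p}^2=\|E_Mf_1E_Mf_2\|_{L^{p/2}}$, the $L^\infty$ endpoint from $|\tau_i|\sim 1$ together with log-convexity of $L^p$ norms, the deduction $|M(\xi-\eta)|\gtrsim c$ from \eqref{eq:bilinearCondition0} by Cauchy--Schwarz, and the transversal bilinear $L^2$ bound via the Jacobian of $(\xi,\eta)\mapsto(\xi+\eta,\,M\xi\cdot\xi+M\eta\cdot\eta)$. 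But that $L^2$ bound only reaches $p=4$, and the entire content of the theorem is the passage from $4$ down to $\tfrac{2(n+2)}{n}$ via induction on scales --- which is exactly where your sketch stops being a proof.

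Two concrete gaps. First, the claim that ``the wave packets behave exactly as for $M=I_{n-1}$\dots so the argument proceeds as in the elliptic case'' is the assertion that actually requires proof, and at face value it is false: in Tao's sharp elliptic argument the positive-definiteness of the Hessian enters essentially in the combinatorial core, where one shows that $\tau_2$-tubes through a fixed point force the relevant intersections to concentrate near a quantitatively \emph{curved} quadric, on which an $L^2$ restriction estimate is then applied. For indefinite $M$ that quadric can be flat in the directions of the negative eigenvalues, and Lee's contribution is precisely a substitute geometric lemma showing that the four-point hypothesis \eqref{eq:bilinearCondition0} --- not merely the two-point consequence $|M(\xi-\eta)|\gtrsim c$, which is all your sketch extracts from it --- controls the curvature of that quadric in the directions that matter. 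Your sketch uses \eqref{eq:bilinearCondition0} only to get pairwise transversality of tubes meeting a common $R^{1/2}$-ball, which is strictly weaker than what the argument needs. Second, the induction step as you describe it is internally problematic: you propose to rescale $R^{-1/2}$-subcaps parabolically and apply the inductive hypothesis to pairs of them, but for hyperbolic $M$ strong separation is \emph{not} inherited by arbitrary pairs of subcaps after rescaling --- this failure is exactly the phenomenon behind the dichotomy of Lemma \ref{lem:geometricLemma} --- whereas the Tao--Lee scheme inducts on the spatial scale with the caps $\tau_1,\tau_2$ held fixed. As written, the proposal is an accurate table of contents for \cite{Lee06}, not a proof.
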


We recall a geometric lemma which was proved by Barron \cite{Barron}. To state it, we need the following notation. Let $K:=R^{\varepsilon^2}$ be a large number. Suppose that $\tau_1$ and $\tau_2$ are two $K^{-1}$-balls in $B^{n-1}(0,1)$ whose centers are separated by $\sim K^{-1}$. We say that two $K^{-1}$-caps $\tau_1, \tau_2$ are strongly separated if
\begin{equation}\label{eq:bilinearCondition}
\inf_{ \substack{ \xi, \bar{\xi} \in \tau_1 \\ \eta, \bar{\eta} \in \tau_2 } } | M(\xi - \eta) \cdot (\bar{\xi} - \bar{\eta})| \gtrsim K^{-1}
\end{equation}
holds.

\begin{lemma}[\cite{Barron}]\label{lem:geometricLemma}
Let $\{\tau\}$ be a collection of finitely-overlapping $K^{-1}$-balls in $B^{n-1}(0,1)$ with $E_Mf = \sum_{\tau} E_Mf_{\tau}$. Recall that
$\min \{n-1-m,m\}=m$. If $K$ is sufficiently large then one of the following must occur.
\begin{enumerate}
		\item There exists a uniform constant $\epsilon_0 \in (0,\frac{1}{4(n-1)})$ and an $m$-dimensional affine space $V$ in $\mathbb{R}^{n-1}$ such that every $\tau$ is contained in an $O(K^{-\epsilon_0})$ neighborhood of $V$.
		\item There are two $K^{-1}$-balls $\tau_1, \tau_2$ for which
$$\inf_{ \substack{ \xi, \bar{\xi} \in \tau_1 \\ \eta, \bar{\eta} \in \tau_2 } } | M(\xi - \eta) \cdot (\bar{\xi} - \bar{\eta})| \gtrsim K^{-1}.$$
\end{enumerate}
\end{lemma}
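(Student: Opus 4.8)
The plan is to reduce the dichotomy to a quantitative statement about the non-degenerate symmetric bilinear form $B(u,v):=Mu\cdot v$ on $\mathbb{R}^{n-1}$ (which has signature $(n-1-m,m)$ with $m\le n-1-m$) and the finite set $S$ of centres of the caps $\tau$. First I would fix, once and for all, a value $\epsilon_0\in(0,\tfrac{1}{4(n-1)})$ depending only on $n$ (any such value works; the argument only uses $2(n-1)\epsilon_0<1$), pick one cap and call its centre $s_0$, and translate, setting $T:=\{\,s-s_0:\ s\in S\,\}\subset B^{n-1}(0,2)$, so that $0\in T$. If $\tau_1,\tau_2$ are caps with centres $s_1,s_2$, then for $\xi,\bar\xi\in\tau_1$ and $\eta,\bar\eta\in\tau_2$ one may write $\xi-\eta=(s_1-s_2)+u$ and $\bar\xi-\bar\eta=(s_1-s_2)+v$ with $|u|,|v|\le 2K^{-1}$, and since $\|M\|=1$ and $|s_1-s_2|\le 2$,
\[
\bigl|\,M(\xi-\eta)\cdot(\bar\xi-\bar\eta)-M(s_1-s_2)\cdot(s_1-s_2)\,\bigr|\ \lesssim\ K^{-1}.
\]
Hence it suffices to prove the following claim: \emph{if $|M(t-t')\cdot(t-t')|\le c_1K^{-1}$ for all $t,t'\in T$, with $c_1$ a large absolute constant, then alternative (1) holds}. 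Indeed, granting the claim, either (1) holds, or there are two centres $s_1,s_2$ with $|M(s_1-s_2)\cdot(s_1-s_2)|>c_1K^{-1}$, which for $c_1$ large dominates the $O(K^{-1})$ error above and forces $\inf_{\xi,\bar\xi\in\tau_1,\,\eta,\bar\eta\in\tau_2}|M(\xi-\eta)\cdot(\bar\xi-\bar\eta)|\gtrsim K^{-1}$, i.e.\ alternative (2). By polarisation, the hypothesis of the claim gives $|B(t,t')|\lesssim c_1K^{-1}$ for all $t,t'\in T$ (use $t'=0$ to control $B(t,t)$).

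To prove the claim I would extract a well-conditioned frame inside $\mathrm{span}(T)$ by a greedy selection: set $V_0=\{0\}$ and, having chosen $t_1,\dots,t_{j-1}$, let $t_j\in T$ maximise the distance $r_j:=\mathrm{dist}(\,\cdot\,,V_{j-1})$ to $V_{j-1}:=\mathrm{span}(t_1,\dots,t_{j-1})$; keep $t_j$ and pass to $V_j:=\mathrm{span}(V_{j-1},t_j)$ as long as $r_j\ge K^{-\epsilon_0}$, and stop otherwise, after $\ell$ steps. If $\ell\le m$ then every point of $T$ lies within $K^{-\epsilon_0}$ of $V_\ell$; extending $V_\ell$ to an $m$-dimensional subspace and translating back by $s_0$, every cap has radius $K^{-1}\le K^{-\epsilon_0}$ and therefore lies in the $O(K^{-\epsilon_0})$-neighbourhood of an affine $m$-plane, which is alternative (1). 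So I may assume $\ell\ge m+1$ and set $V':=\mathrm{span}(t_1,\dots,t_{m+1})$, a subspace of dimension $m+1$. Gram--Schmidt expresses an orthonormal basis $e_1,\dots,e_{m+1}$ of $V'$ in terms of $t_1,\dots,t_{m+1}$ via an upper-triangular change of basis whose diagonal entries are $r_1,\dots,r_{m+1}\in[K^{-\epsilon_0},2]$ and whose off-diagonal entries are $O(1)$; hence the inverse change of basis has operator norm $\lesssim_n K^{(n-1)\epsilon_0}$, and so
\[
\max_{1\le i,j\le m+1}|B(e_i,e_j)|\ \lesssim_n\ K^{2(n-1)\epsilon_0}\,\max_{t,t'\in T}|B(t,t')|\ \lesssim_n\ K^{2(n-1)\epsilon_0-1}.
\]
Since $2(n-1)\epsilon_0-1<0$, this forces $\|B|_{V'}\|_{\mathrm{op}}<1$ once $K$ is large (depending only on $n$).

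This is the crux, and it now collides with the non-degeneracy of $M$. The $(+1)$-eigenspace $E_+$ of $M$ has dimension $n-1-m$, so $\dim V'+\dim E_+=(m+1)+(n-1-m)=n>n-1=\dim\mathbb{R}^{n-1}$, whence $V'\cap E_+$ contains a unit vector $u$; for this vector $B(u,u)=Mu\cdot u=|u|^2=1$, so $\|B|_{V'}\|_{\mathrm{op}}\ge1$, contradicting the previous paragraph. Therefore, if $K$ is large and (1) fails, the hypothesis of the claim cannot hold, and (as explained above) alternative (2) holds.

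I expect the quantitative step of the third paragraph to be the main obstacle. A small quadratic form need not be isotropic, so smallness of $B$ on the set $T$ does not by itself force $\mathrm{span}(T)$ to be low-dimensional; what rescues the argument is that $M$ is \emph{non-degenerate}, so \emph{every} $(m+1)$-dimensional subspace carries a unit vector on which $|B|=1$. The greedy selection is exactly the device that converts ``$|B|\lesssim K^{-1}$ on $T$'' into the bound $\|B|_{V'}\|_{\mathrm{op}}\lesssim_n K^{2(n-1)\epsilon_0-1}$, which undercuts $1$ precisely because $\epsilon_0<\tfrac{1}{4(n-1)}$ (so $2(n-1)\epsilon_0<\tfrac12<1$). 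The remaining ingredients — estimating the Gram--Schmidt matrix and its inverse, and comparing the absolute constant $c_1$ with the $O(K^{-1})$ error produced by the cap radius — are routine bookkeeping.
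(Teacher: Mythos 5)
The paper does not actually prove this lemma; it is quoted verbatim from Barron \cite{Barron}, so there is no in-paper proof to compare against. Your argument is correct and complete, and it follows the same strategy as Barron's original proof: (i) reduce the four-point condition to the quadratic form $Q(v)=Mv\cdot v$ evaluated on differences of cap centres, at the cost of an $O(K^{-1})$ error absorbed by taking the constant $c_1$ large; (ii) polarise to get smallness of the bilinear form on all pairs; (iii) greedily extract a $K^{-\epsilon_0}$-well-conditioned set $t_1,\dots,t_\ell$, so that either $\ell\le m$ (giving alternative (1), since the caps have radius $K^{-1}\le K^{-\epsilon_0}$) or one obtains an $(m+1)$-dimensional subspace $V'$ on which $\|B|_{V'}\|_{\mathrm{op}}\lesssim_n K^{2(n-1)\epsilon_0-1}<1$; and (iv) contradict this via the dimension count $\dim V'+\dim E_+=n>n-1$, which forces a unit vector $u\in V'$ with $B(u,u)=1$. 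The quantitative linear algebra is right: the Gram--Schmidt change of basis is upper triangular with diagonal entries $r_j\in[K^{-\epsilon_0},2]$ and bounded off-diagonal entries, so its inverse has norm $\lesssim_n K^{(m+1)\epsilon_0}\le K^{(n-1)\epsilon_0}$, and the condition $\epsilon_0<\tfrac{1}{4(n-1)}$ gives $2(n-1)\epsilon_0<\tfrac12$, exactly what is needed to undercut $1$ for $K$ large. This is a faithful, self-contained reconstruction of the cited result.
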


The following version of Theorem \ref{thm:bilinear0} is to be used in the proof of Theorem \ref{localmainprop0}.
\begin{theorem}[\cite{Lee06, Vargas05}]\label{thm:usefulbilinear0} Suppose $f_1$ and $f_2$ are supported in open balls $\tau_1$ and $\tau_2$ of radius $K^{-1}$. If $\tau_1, \tau_2$ satisfy the condition \eqref{eq:bilinearCondition}, then
\begin{equation}\label{eq:bilnearEst0}
\| |E_Mf_1 E_Mf_{2}|^{\frac{1}{2}} \|_{L^{p}(B^n_R)} \leq C_{\epsilon}K^{O(1)}R^{\epsilon} \|f_1\|^{\frac{1}{2}}_{L^2} \|f_2\|_{L^2}^{\frac{1}{2}} \end{equation}
holds whenever $p \geq \frac{2(n+2)}{n}$.
\end{theorem}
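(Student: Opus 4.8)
The plan is to reduce Theorem~\ref{thm:usefulbilinear0} to Theorem~\ref{thm:bilinear0} by a parabolic rescaling adapted to the operator $E_M$. Let $c_1$ denote the center of $\tau_1$ and substitute $\xi = c_1 + K^{-1}\omega$ in \eqref{extensionoperator}. Expanding the quadratic form,
\[
M\xi\cdot\xi = Mc_1\cdot c_1 + 2K^{-1}(Mc_1)\cdot\omega + K^{-2}\,M\omega\cdot\omega,
\]
one gets $x'\cdot\xi + x_n(M\xi\cdot\xi) = \Phi(x) + \big(y'\cdot\omega + y_n(M\omega\cdot\omega)\big)$, where $\Phi(x) := x'\cdot c_1 + x_n(Mc_1\cdot c_1)$ depends only on $x$ and
\[
y = L(x) := \big(K^{-1}(x' + 2x_n\,Mc_1),\; K^{-2}x_n\big)
\]
is an invertible affine map with $|\det L| = K^{-(n+1)}$. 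Setting $g_i(\omega) := f_i(c_1 + K^{-1}\omega)$ for $i=1,2$, this yields $E_M f_i(x) = e[\Phi(x)]\,E_M g_i(L(x))$, hence
\[
|E_M f_1(x)\,E_M f_2(x)| = \big| E_M g_1(L(x))\, E_M g_2(L(x))\big|.
\]

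Next I would record how the data transform under this substitution. The function $g_1$ is supported in $B^{n-1}(0,1)$, and, since the centers of $\tau_1$ and $\tau_2$ are $\sim K^{-1}$-separated, $g_2$ is supported in a ball of radius $1$ lying within an absolute distance of the origin; composing with a parabolic dilation by an absolute constant if necessary, we may assume $g_1$ and $g_2$ are supported in balls of radius $\sim 1$ contained in $B^{n-1}(0,1)$. For $\xi\in\tau_1$, $\eta\in\tau_2$ one has $\xi-\eta = K^{-1}(\omega-\omega')$ with $\omega = K(\xi-c_1)$, $\omega' = K(\eta-c_1)$ in the rescaled caps, so multiplying \eqref{eq:bilinearCondition} through by $K^2$ turns it into precisely the transversality hypothesis \eqref{eq:bilinearCondition0} for the rescaled caps, with constant $\gtrsim 1$. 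Finally, $\|g_i\|_{L^2} = K^{(n-1)/2}\|f_i\|_{L^2}$, and for $|x|\le R$ one checks $|L(x)|\lesssim K^{-1}R$, so $L(B_R^n)\subset B_{R'}^n$ with $R'\le R$ once $K$ is large.

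Finally I would assemble the pieces. Changing variables $y = L(x)$ (so $dx = K^{n+1}\,dy$), using $L(B_R^n)\subset B_{R'}^n$, and invoking Theorem~\ref{thm:bilinear0} for $g_1,g_2$ on $B_{R'}^n$,
\[
\big\||E_M f_1 E_M f_2|^{1/2}\big\|_{L^p(B_R^n)} = K^{(n+1)/p}\big\||E_M g_1 E_M g_2|^{1/2}\big\|_{L^p(L(B_R^n))} \le C_\epsilon\,K^{(n+1)/p}\,{R'}^{\epsilon}\,\|g_1\|_{L^2}^{1/2}\|g_2\|_{L^2}^{1/2};
\]
substituting $\|g_i\|_{L^2} = K^{(n-1)/2}\|f_i\|_{L^2}$ and $R'\le R$ then gives
\[
\big\||E_M f_1 E_M f_2|^{1/2}\big\|_{L^p(B_R^n)} \le C_\epsilon\,K^{\frac{n+1}{p}+\frac{n-1}{2}}\,R^{\epsilon}\,\|f_1\|_{L^2}^{1/2}\|f_2\|_{L^2}^{1/2},
\]
which is the asserted estimate, since $\frac{n+1}{p}+\frac{n-1}{2} = O(1)$ for $p\ge\frac{2(n+2)}{n}$. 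The argument is mostly bookkeeping; the steps that genuinely need care are writing down the affine map $L$ associated with the quadratic form $M\xi\cdot\xi$ correctly, so that the product $E_M f_1\,E_M f_2$ transforms under a \emph{single} change of variables, and verifying that $L(B_R^n)$ sits inside a ball of radius at most $R$, so that the $R^{\epsilon}$-loss of Theorem~\ref{thm:bilinear0} is not replaced by a larger power of $R$.
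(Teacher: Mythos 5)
Your rescaling argument is correct and is exactly the standard reduction of the $K^{-1}$-scale bilinear estimate to Theorem~\ref{thm:bilinear0}; the paper itself gives no proof here, deferring to \cite{Barron}, and your write-up is essentially the argument found there. One small slip: the substitution $\xi=c_1+K^{-1}\omega$ carries a Jacobian $d\xi=K^{-(n-1)}d\omega$, so the correct identity is $E_Mf_i(x)=K^{-(n-1)}e[\Phi(x)]\,E_Mg_i(L(x))$ rather than the one you wrote; this only inserts an extra factor $K^{-(n-1)}$ on the favorable side of the final inequality and is harmless, since the theorem tolerates any $K^{O(1)}$ loss. The remaining bookkeeping (the map $L$, $|\det L|=K^{-(n+1)}$, the transversality constant after multiplying \eqref{eq:bilinearCondition} by $K^2$, and $L(B_R)$ landing in a ball of radius $\lesssim K^{-1}R\le R$) all checks out.
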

We refer to \cite{Barron} for the details of Theorem \ref{thm:usefulbilinear0}.

\subsection{Wave packet decomposition}

We recall the wave packet decomposition at scale $R$ following the description of \cite{Guth2, Wang}. We decompose the unit ball in $\mathbb{R}^{n-1}$ into finitely overlapping small balls $\theta$ of radius $R^{-1/2}$. These small disks are referred to as $R^{-1/2}$-caps. Let $\{\psi_{\theta}\}$ be a smooth partition of unity adapted to $\{\theta\}$. We write
\[f=\sum_{\theta}\psi_{\theta}f,\]
and define
\[f_{\theta}:=\psi_{\theta}f.\]
We cover $\mathbb{R}^{n-1}$ by finitely overlapping balls of radius about $R^{\frac{1+\delta}{2}}$, centered at vectors $\upsilon\in R^{\frac{1+\delta}{2}}\mathbb{Z}^{n-1}$, where $\delta$ is a small number satisfying $\delta \sim \varepsilon^3$. Let $\{\eta_{\upsilon}\}$ be a smooth partition of unity adapted to this cover. We can now decompose
\[f=\sum_{\theta,\upsilon}\big(\eta_{\upsilon}(\psi_{\theta}f)^{\wedge}\big)^{\vee}=\sum_{\theta,\upsilon}\eta_{\upsilon}^{\vee}\ast(\psi_{\theta}f).\]
We choose smooth functions $\tilde{\psi}_{\theta}$ such that $\tilde{\psi}_{\theta}$ is supported on $\theta$ but $\tilde{\psi}_{\theta}=1$ on a $cR^{-1/2}$ neighborhood of the support of $\psi_{\theta}$ for a small constant $c>0$. We define
\[f_{\theta,\upsilon}:=\tilde{\psi}_{\theta}[\eta_{\upsilon}^{\vee}\ast(\psi_{\theta}f)].\]
Since $\eta_{\upsilon}^{\vee}(x)$ is rapidly decaying for $\vert x \vert\gtrsim R^{\frac{1-\delta}{2}}$,
\[f=\sum_{(\theta,\upsilon):d(\theta)=R^{-1/2}}f_{\theta,\upsilon}+RapDec(R)\Vert f \Vert_{L^2},\]
where $d(\theta)$ denotes the diameter of $\theta$, and $RapDec(R)$ means that the quantity is bounded by $O_N(R^{-N})$ for any large integer $N>0$.

The wave packets $E_Mf_{\theta,\upsilon}$ satisfy two useful properties. The first property is that the functions $f_{\theta,\upsilon}$ are approximately orthogonal. The second property is that on the ball $B_R$ the function $E_Mf_{\theta,\upsilon}$ is essentially supported on the tube
\[T_{\theta,\upsilon}:=\{(x',x_n)\in B^n(0,R), \vert x'+2x_n\omega_{\theta}+\upsilon \vert\leq R^{1/2+\delta}\},\]
where $\omega_{\theta}$ denotes the center of $\theta$.

Let $\mathbb{T}$ be a set of $(\theta,\upsilon)$. We say that $f$ is concentrated on wave packets from $\mathbb{T}$ if \[f=\sum_{(\theta,\upsilon)\in\mathbb{T}}f_{\theta,\upsilon}+RapDec(R)\Vert f \Vert_{L^2}.\]

\subsection{Flat decoupling}

We will employ the following result.
\begin{proposition}[Flat Decoupling] \label{prop:decoupling}
Suppose $\mathcal{T}$ is a collection of finitely-overlapping $\rho$-caps $\tau$ with $\rho^{-1} < R$. Then
$$\| \sum_{\tau \in \mathcal{T}} E_Mf_{\tau}\|_{L^{p}(B_R)} \lesssim (\# \mathcal{T})^{\frac{1}{2}-\frac{1}{p}}(\log \rho)^{O(1)}\big(\sum_{\tau \in \mathcal{T}} \|E_Mf_{\tau}\|^{2}_{L^{p}(w_{B_R})}\big)^{\frac{1}{2}},$$
where $w_{B_R}$ is a smooth weight function adapted to $B_R$.
\end{proposition}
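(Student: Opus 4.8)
The plan is to prove this by the standard cube-by-cube argument, reducing to a single cube dual to the scale of the caps $\tau$ and then reassembling. First I would tile $B_R$ by a finitely-overlapping family $\{Q\}$ of cubes of side length $\rho^{-1}$ (possible since $\rho^{-1} < R$), so that $|Q| \sim \rho^{-n}$. The key structural input is local constancy: for each $\tau \in \mathcal{T}$ the spatial Fourier support of $E_Mf_\tau$ lies on the quadric piece $\{(\xi, M\xi\cdot\xi) : \xi \in \tau\}$, which is contained in a ball of radius $O(\rho)$ because $\tau$ is a $\rho$-cap and $|\nabla(M\xi\cdot\xi)| = O(1)$ on $B^{n-1}(0,1)$; hence $|E_Mf_\tau|$ varies slowly on scale $\rho^{-1}$, and for any $r \geq 1$ one gets $\|E_Mf_\tau\|_{L^\infty(Q)} \lesssim |Q|^{-1/r}\|E_Mf_\tau\|_{L^r(w_Q)}$ for a rapidly decaying weight $w_Q$ adapted to $Q$.

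On a fixed $Q$, writing $F = \sum_{\tau \in \mathcal{T}} E_Mf_\tau$, I would use the interpolation inequality $\|F\|_{L^p(Q)}^p \leq \|F\|_{L^\infty(Q)}^{p-2}\|F\|_{L^2(Q)}^2$ (valid since $p \geq 2$). For the $L^\infty$ factor, the triangle inequality together with Cauchy--Schwarz costs a factor $(\#\mathcal{T})^{1/2}$ and reduces matters to $\sum_\tau \|E_Mf_\tau\|_{L^\infty(Q)}^2$, each term of which is handled by local constancy. For the $L^2$ factor I would invoke local $L^2$-orthogonality: the Fourier supports of the $E_Mf_\tau\, w_Q^{1/2}$ are the $O(\rho)$-neighbourhoods of the distinct quadric pieces over the caps $\tau$, and these are finitely overlapping since the $\tau$ are, so Plancherel gives $\|F\|_{L^2(w_Q)}^2 \lesssim (\log\rho)^{O(1)}\sum_\tau \|E_Mf_\tau\|_{L^2(w_Q)}^2$. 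Converting the $L^\infty(w_Q)$ and $L^2(w_Q)$ norms back to $L^p(w_Q)$ by local constancy and by Hölder on the essentially compactly supported weight, and checking that all powers of $|Q|\sim\rho^{-n}$ cancel, should yield, for each $Q$,
\[
\|F\|_{L^p(Q)} \lesssim (\#\mathcal{T})^{\frac{1}{2}-\frac{1}{p}}(\log\rho)^{O(1)}\Big(\sum_{\tau \in \mathcal{T}} \|E_Mf_\tau\|_{L^p(w_Q)}^2\Big)^{\frac{1}{2}}.
\]

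Finally, I would raise this to the $p$-th power, sum over the cubes $Q$, and apply Minkowski's inequality in $\ell^{p/2}$ (again legitimate because $p \geq 2$) to exchange the sums over $Q$ and $\tau$, after which the bound $\sum_Q w_Q \lesssim w_{B_R}$ coming from finite overlap of the weights upgrades $\sum_Q \|E_Mf_\tau\|_{L^p(w_Q)}^p$ to $\lesssim \|E_Mf_\tau\|_{L^p(w_{B_R})}^p$; taking $p$-th roots gives the stated estimate. The step I expect to require the most care is the local $L^2$-orthogonality on the weight $w_Q$ — this is also where the harmless factor $(\log\rho)^{O(1)}$ is produced — while the rest is routine bookkeeping of the weights and of the powers of $|Q|$. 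The hypothesis $\rho^{-1} < R$ is used precisely to guarantee that the dual cubes $Q$ genuinely tile $B_R$, and $p \geq 2$ is used both in the interpolation step and in Minkowski's inequality.
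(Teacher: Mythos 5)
Your proposal is correct and is essentially the paper's proof written out in full: the paper disposes of the endpoints $p=2$ (Plancherel/almost-orthogonality) and $p=\infty$ (Cauchy--Schwarz) and then cites ``the interpolation argument in [BD15]'', which is precisely your tiling of $B_R$ by dual cubes of side $\rho^{-1}$, the local H\"older interpolation between $L^2$ and $L^\infty$ with local constancy, and the $\ell^{p/2}$ reassembly. The exponent bookkeeping in your sketch checks out (the powers of $|Q|$ cancel exactly), so this is a faithful expansion of the argument the paper delegates to the citation.
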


\begin{proof} The case $p = \infty$ is just the Cauchy-Schwarz inequality, and when $p =2$ the proposition follows from Plancharel's theorem. The remaining cases follow from the interpolation argument in \cite{BD15}.
\end{proof}

Finally we recall that if $R$ is small enough then Theorem \ref{localmainprop0} follows directly from H\"{o}lder's inequality. We can therefore assume by induction that Theorem \ref{localmainprop0} is true at scale $r$ whenever $r \leq \frac{R}{2}$. For technical reasons related to the decoupling result in Proposition \ref{prop:decoupling} we can also assume that the following weighted estimate holds. For every $\varepsilon> 0$, there exists a positive constant $C_{\varepsilon}$ such that
$$\|E_Mf\|_{L^p (w_{B_r})} \leq C_{\epsilon}r^{\epsilon}\|f\|_{L^p(B^{n-1}(0,1))}$$
holds for any $r \leq \frac{R}{2}$ and $p\geq \max\{\frac{2(n^2+2n-2)}{n^2-1}, \frac{2(n-m)}{n-m-1}\}$, where $w_{B_r}$ is a smooth weight function adapted to $B_r$.

\section{\textbf{Broad norm adapted to hyperbolic paraboloids}}

Basic on the definition of the $k$-broad norm introduced by Guth \cite{Guth2}, we discuss the broad norm adapted to hyperbolic paraboloids.

We decompose $B^{n-1}(0,1)$ into finitely overlapping $K^{-1}$-balls $\tau$ with $K= R^{\varepsilon^2}$. We write $f=\sum_{\tau}f_{\tau}$ by partition of unity with $f_{\tau}$ supported in $\tau$. We use a collection $\mathcal{B}$ of finitely-overlapping $K$-balls to cover $B_R$ on the spatial side. Let $K_1:=R^{\varepsilon^4}$. For any integer $A\sim R^{\varepsilon^6}$, we define
\begin{equation}\label{pmass}
 \mu_{E_Mf}(B):= \min_{V_1,...,V_{A}}\max_{\tau \notin V_s\;\text{for\;all}\;1\leq s \leq A}\Vert E_Mf_{\tau} \Vert^p_{L^p(B)}
\end{equation}
for every ball $B\in \mathcal{B}$, where each $V_s$ denotes an $m$-dimensional affine space in $\mathbb{R}^{n-1}$ and $\tau \notin V_s$ means $dist(\tau, V_s)>K_1^{-1}$. If a set $U$ is (finitely overlapping) tiled by $B$, then we define $\mu_{E_Mf}(U)$ by summing over the tiling
\[\mu_{E_Mf}(U):= \sum_{B \subset U} \mu_{E_Mf}(B).\]
We define the broad norm adapted to hyperbolic paraboloids on $B_R$ as
\[\Vert E_Mf \Vert^p_{HL^p_{A}(B_R)}:=\sum_{B \subset B_R}\mu_{E_Mf}(B).\]

By the uncertainty principle, $\vert E_Mf_{\tau} \vert$ is essentially a constant on any $B\in \mathcal{B}$. The usual triangle inequality and H\"{o}lder's inequality for the $HL^p_{A}$-norm can be verified with a modification of $A$.

\begin{lemma}\label{hbtriangle}
Suppose that $f=g+h$ and suppose that $A=A_1+A_2$, where $A, A_1, A_2$ are non-negative integers. Then
\[\Vert E_Mf \Vert_{HL^p_{A}(U)}\lesssim \Vert E_Mg \Vert_{HL^p_{A_1}(U)}+\Vert E_Mh \Vert_{HL^p_{A_2}(U)}.\]
\end{lemma}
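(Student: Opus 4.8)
The plan is to reduce the statement for $HL^p_A$ to the corresponding elementary statement for each individual ball $B \in \mathcal{B}$, where the triangle inequality becomes essentially a consequence of the definition of $\mu$ as a min-max over affine subspaces. First I would recall that, for a fixed $B$, the quantity $\mu_{E_Mf}(B)$ selects $A$ affine $m$-planes $V_1,\dots,V_A$ minimizing the maximum of $\|E_Mf_\tau\|_{L^p(B)}^p$ over all caps $\tau$ not within $K_1^{-1}$ of any $V_s$. Given the splitting $f = g + h$, let $V_1^g, \dots, V_{A_1}^g$ be a minimizing configuration for $\mu_{E_Mg}(B)$ and $V_1^h, \dots, V_{A_2}^h$ a minimizing configuration for $\mu_{E_Mh}(B)$. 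The combined collection $\{V_s^g\} \cup \{V_t^h\}$ has $A_1 + A_2 = A$ planes, so it is an admissible (not necessarily optimal) competitor in the min defining $\mu_{E_Mf}(B)$. Hence
\[
\mu_{E_Mf}(B) \leq \max_{\tau \notin V_s^g,\, \tau \notin V_t^h \,\forall s,t} \|E_Mf_\tau\|_{L^p(B)}^p.
\]

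Next I would estimate the right-hand side pointwise in $\tau$. For any cap $\tau$ avoiding all the $V_s^g$ and all the $V_t^h$, write $E_Mf_\tau = E_Mg_\tau + E_Mh_\tau$ and apply the triangle inequality in $L^p(B)$ together with the elementary bound $(a+b)^p \lesssim_p a^p + b^p$, giving
\[
\|E_Mf_\tau\|_{L^p(B)}^p \lesssim \|E_Mg_\tau\|_{L^p(B)}^p + \|E_Mh_\tau\|_{L^p(B)}^p.
\]
Since $\tau$ avoids every $V_s^g$, the first term is at most $\max_{\tau' \notin V_s^g} \|E_Mg_{\tau'}\|_{L^p(B)}^p = \mu_{E_Mg}(B)$, and similarly the second term is at most $\mu_{E_Mh}(B)$. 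Taking the maximum over the admissible $\tau$ then yields $\mu_{E_Mf}(B) \lesssim \mu_{E_Mg}(B) + \mu_{E_Mh}(B)$. Summing this over all $B \subset U$ in the tiling and using the definition $\|E_Mf\|_{HL^p_A(U)}^p = \sum_{B\subset U}\mu_{E_Mf}(B)$ gives $\|E_Mf\|_{HL^p_A(U)}^p \lesssim \|E_Mg\|_{HL^p_{A_1}(U)}^p + \|E_Mh\|_{HL^p_{A_2}(U)}^p$, and taking $p$-th roots (again absorbing a $p$-dependent constant via $(x+y)^{1/p} \leq x^{1/p}+y^{1/p}$) produces the claimed inequality.

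The only genuinely delicate point — and the one I would state carefully rather than wave at — is the interplay between the partition-of-unity decompositions: $f_\tau = g_\tau + h_\tau$ requires that $g$ and $h$ be broken into caps using the \emph{same} partition of unity $\{f \mapsto f_\tau\}$ as $f$, which is harmless since the decomposition $f = \sum_\tau f_\tau$ is fixed once and for all. A secondary subtlety is that the definition of $\mu$ involves the fixed threshold $K_1^{-1}$ for "$\tau \notin V_s$"; because we use exactly the union of the two optimal plane-families and do not shrink or enlarge the planes, the admissibility conditions compose transparently and no loss in $K_1$ is incurred. I expect no real obstacle here: the lemma is a bookkeeping statement about min-max quantities, and the factor $A = A_1 + A_2$ is precisely what makes the union of competing subspace-families admissible. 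The phrase "with a modification of $A$" in the text refers exactly to this additivity.
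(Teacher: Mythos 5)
Your argument is correct and is exactly the standard verification that the paper leaves implicit (the remark that the triangle inequality "can be verified with a modification of $A$" refers precisely to taking the union of the two optimal families of $A_1$ and $A_2$ planes as a competitor for the min defining $\mu_{E_Mf}(B)$, then using $\|E_Mf_\tau\|_{L^p(B)}^p\lesssim_p\|E_Mg_\tau\|_{L^p(B)}^p+\|E_Mh_\tau\|_{L^p(B)}^p$ and summing over $B$). No issues.
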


\begin{lemma}\label{hbHolder}
Suppose that $1\leq p, p_1, p_2< \infty$ and $0\leq \alpha_1, \alpha_2 \leq 1$ obey $\alpha_1+\alpha_2=1$ and
\[\frac{\alpha}{p}=\frac{\alpha_1}{p_1}+\frac{\alpha_2}{p_2}.\]
Also, suppose that $A=A_1+A_2$. Then
\[\Vert E_Mf \Vert_{HL^p_{A}(U)}\leq \Vert E_Mf \Vert^{\alpha_1}_{HL^{p_1}_{A_1}(U)}\Vert E_Mf \Vert^{\alpha_2}_{HL^{p_2}_{A_2}(U)}.\]
\end{lemma}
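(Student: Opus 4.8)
The plan is to mimic the standard proof of H\"older's inequality for Guth's $k$-broad norm, adapted to the present setting where the exceptional sets are the $m$-dimensional affine spaces $V_1,\dots,V_A$. I would argue ball by ball: since $\Vert E_Mf\Vert^p_{HL^p_A(U)}=\sum_{B\subset U}\mu_{E_Mf}(B)$, it suffices to prove the pointwise-in-$B$ estimate $\mu_{E_Mf}(B)^{1/p}\le \mu_{E_Mf}^{(p_1,A_1)}(B)^{\alpha_1/p_1}\,\mu_{E_Mf}^{(p_2,A_2)}(B)^{\alpha_2/p_2}$ for each $B\in\mathcal B$ (with the obvious notation for the $p_i$-versions of \eqref{pmass}), and then sum over $B\subset U$ using H\"older's inequality for sums with exponents $p_1/(\alpha_1 p)$ and $p_2/(\alpha_2 p)$, whose reciprocals add to $1$ by the hypothesis $\frac{\alpha}{p}=\frac{\alpha_1}{p_1}+\frac{\alpha_2}{p_2}$ together with $\alpha_1+\alpha_2=1$ (so in fact $\alpha=1$ here, matching the Lemma \ref{hbHolder} statement once one reads $\alpha=1$).

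For the per-ball estimate, I would fix $B$ and let $V_1,\dots,V_{A_1}$ be the affine spaces achieving the minimum in the definition of $\mu_{E_Mf}^{(p_1,A_1)}(B)$, and $V_1',\dots,V_{A_2}'$ those achieving the minimum for $\mu_{E_Mf}^{(p_2,A_2)}(B)$. Their union is a collection of $A_1+A_2=A$ affine $m$-planes, so it is an admissible competitor in the minimum defining $\mu_{E_Mf}(B)$. Hence
\[
\mu_{E_Mf}(B)\le \max_{\tau\notin V_s,\,\tau\notin V_t'\;\forall s,t}\Vert E_Mf_\tau\Vert^p_{L^p(B)}.
\]
For any such $\tau$ one has, by ordinary H\"older on $L^p(B)$ (or rather on the finite-dimensional space of constants, since $|E_Mf_\tau|$ is essentially constant on $B$) with exponents governed by $\alpha_1,\alpha_2$,
\[
\Vert E_Mf_\tau\Vert^p_{L^p(B)}\lesssim \Vert E_Mf_\tau\Vert^{p\alpha_1}_{L^{p_1}(B)}\,\Vert E_Mf_\tau\Vert^{p\alpha_2}_{L^{p_2}(B)}
\le \big(\mu_{E_Mf}^{(p_1,A_1)}(B)\big)^{p\alpha_1/p_1}\big(\mu_{E_Mf}^{(p_2,A_2)}(B)\big)^{p\alpha_2/p_2},
\]
where in the last step I use that this particular $\tau$ avoids all the $V_s$ (respectively all the $V_t'$), so its $L^{p_i}(B)$-norm is bounded by the corresponding max, hence by the corresponding $\mu$. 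Raising to the power $1/p$ gives the claimed per-ball bound. The triangle inequality Lemma \ref{hbtriangle} is handled the same way: with $A=A_1+A_2$ and competitors chosen for $g$ and for $h$ separately, their union is admissible for $f=g+h$, and on the surviving caps $|E_Mf_\tau|\le|E_Mg_\tau|+|E_Mh_\tau|$ pointwise gives $\Vert E_Mf_\tau\Vert_{L^p(B)}\lesssim\Vert E_Mg_\tau\Vert_{L^p(B)}+\Vert E_Mh_\tau\Vert_{L^p(B)}$.

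The only real subtlety — and the place where the implicit constants and the "modification of $A$" in the remark before the lemma enter — is the comparison between the $L^{p_i}(B)$-norm of $E_Mf_\tau$ and the quantity $\mu^{(p_i,A_i)}_{E_Mf}(B)$: the latter is a $\max$ over caps avoiding a \emph{specific} optimal family, and one must make sure the $\tau$ surviving in the combined problem is among those. This is exactly why one takes the \emph{union} of the two optimal families rather than trying to reconcile them, and why the broad norm on the right uses $A_1$ and $A_2$ while the left uses $A=A_1+A_2$. I would also remark that the uncertainty-principle fact that $|E_Mf_\tau|$ is essentially constant on each $K$-ball $B\in\mathcal B$ is what lets ordinary H\"older at the level of $L^{p_i}(B)$ be applied losslessly (up to $O(1)$ factors absorbed into $\lesssim$), since on a single constant all $L^{p}$-norms coincide after normalizing by $|B|$, and the $|B|$ powers match precisely because $\frac1p=\frac{\alpha_1}{p_1}+\frac{\alpha_2}{p_2}$. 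I expect no serious obstacle beyond bookkeeping; the argument is a routine transcription of Guth's broad-norm H\"older inequality, with $m$-planes playing the role of $(k-1)$-planes.
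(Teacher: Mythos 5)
Your proof is correct and is exactly the standard Guth broad-norm H\"older argument that the paper invokes without writing out (taking the union of the two optimal families of $m$-planes, applying H\"older per surviving cap, then H\"older for the sum over $B\subset U$). One small simplification: the interpolation inequality $\Vert g\Vert_{L^p(B)}\leq \Vert g\Vert_{L^{p_1}(B)}^{\alpha_1}\Vert g\Vert_{L^{p_2}(B)}^{\alpha_2}$ under $\frac1p=\frac{\alpha_1}{p_1}+\frac{\alpha_2}{p_2}$ is an exact consequence of H\"older on any measure space (write $|g|^p=|g|^{p\alpha_1}|g|^{p\alpha_2}$), so you do not need the uncertainty-principle/``essentially constant'' remark, and the stated inequality then holds with constant $1$ rather than $\lesssim$.
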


The value of $A$ can change from line to line whenever we apply Lemma \ref{hbtriangle} and Lemma \ref{hbHolder}. In what follows, we omit $A$ from our notation unless it plays a role in the proof of Theorem \ref{localmainprop0}.

\section{The proof of Theorem \ref{localmainprop0}}

In this section, we prove Theorem \ref{localmainprop0}. Let $p_{broad}:=\frac{2(n^2+2n-2)}{n^2-1}$ and $p_{narrow}:=\frac{2(n-m)}{n-m-1}$.
We denote $\max\{p_{broad}, p_{narrow}\}$ by $\bar{p}$.

Given $B \in \mathcal{B}$, we define its significant set
$$\mathcal{S}_{p}(B):= \{ \tau: \|E_Mf_{\tau} \|_{L^p(B)} \geq \frac{1}{K^{n-1+\varepsilon}}\|E_Mf\|_{HL^p(B)}\},$$
and write
\[\|E_Mf\|_{HL^{p}(B)}\leq \|\sum_{\tau \in \mathcal{S}_{p}(B)}E_Mf_{\tau}\|_{HL^{p}(B)}+\|\sum_{\tau \notin \mathcal{S}_{p}(B)}E_Mf_{\tau}\|_{HL^{p}(B)}.\]
It is easy to see
$$\|\sum_{\tau \notin \mathcal{S}_{p}(B)} E_Mf_{\tau}\|_{HL^{p}(B)}\lesssim  K^{n-1}\max_{\tau \notin \mathcal{S}_p(B)}
\Vert E_Mf_{\tau} \Vert_{L^p(B)}\lesssim K^{-\varepsilon}\|E_Mf\|_{HL^{p}(B)}.$$
The following lemma is useful.
\begin{lemma}\label{hypervsbil}
For any function $g$ and sufficiently large number $R$, there holds
\begin{equation}\label{eq:hypervsbil}
\Vert E_Mg \Vert^p_{HL^p(B_R)}\lesssim K^{O(1)}\sum_{\tau_1,\tau_2\;\text{are\;strongly\;separated}}\| |E_Mg_{\tau_1} E_Mg_{\tau_2}|^{\frac{1}{2}} \|^p_{L^{p}(B_R)}.
\end{equation}
\end{lemma}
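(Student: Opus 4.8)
The plan is to reduce the $HL^p$-norm to a bilinear expression by exploiting the defining structure of the broad norm together with the geometric dichotomy in Lemma \ref{lem:geometricLemma}. Fix a ball $B \in \mathcal{B}$. By the reduction to significant caps already recorded before the statement, we may assume $\|E_Mg\|_{HL^p(B)} \lesssim \|\sum_{\tau \in \mathcal{S}_p(B)} E_Mg_\tau\|_{HL^p(B)}$, so that on $B$ only the caps $\tau \in \mathcal{S}_p(B)$ matter, and each such cap satisfies the lower bound $\|E_Mg_\tau\|_{L^p(B)} \geq K^{-(n-1+\varepsilon)}\|E_Mg\|_{HL^p(B)}$. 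First I would observe that, by the definition \eqref{pmass} of $\mu_{E_Mg}(B)$, there is a choice of $A$ affine $m$-planes $V_1,\dots,V_A$ realizing the minimum, and $\mu_{E_Mg}(B) = \max_{\tau \notin V_s \forall s} \|E_Mg_\tau\|_{L^p(B)}^p$. The key point is that the significant caps which are \emph{not} trapped near any $V_s$ cannot all lie in a common $K^{-\epsilon_0}$-neighborhood of a single $m$-plane (otherwise that plane, say $V_1$, would already absorb them); hence Lemma \ref{lem:geometricLemma} forces alternative (2): among these caps there exist two, $\tau_1$ and $\tau_2$, that are strongly separated in the sense of \eqref{eq:bilinearCondition}.

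Next I would turn this into a pointwise bound. Since $\tau_1, \tau_2$ are two significant caps not lying in any $V_s$, we have $\mu_{E_Mg}(B) \leq \|E_Mg_{\tau_1}\|_{L^p(B)}^p$ and also $\mu_{E_Mg}(B) \leq \|E_Mg_{\tau_2}\|_{L^p(B)}^p$, so
\[
\mu_{E_Mg}(B) \leq \big(\|E_Mg_{\tau_1}\|_{L^p(B)}\|E_Mg_{\tau_2}\|_{L^p(B)}\big)^{p/2} = \big\| |E_Mg_{\tau_1}|^{1/2}|E_Mg_{\tau_2}|^{1/2}\big\|_{L^p(B)}^p \cdot (\text{const}),
\]
where the last identity uses that, by the uncertainty principle, $|E_Mg_{\tau_i}|$ is essentially constant on $B$ (this was noted in the paragraph following the definition of the $HL^p_A$-norm), so $\||E_Mg_{\tau_1}E_Mg_{\tau_2}|^{1/2}\|_{L^p(B)} \sim \|E_Mg_{\tau_1}\|_{L^p(B)}^{1/2}\|E_Mg_{\tau_2}\|_{L^p(B)}^{1/2}$. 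Here one absorbs the $K^{n-1+\varepsilon}$ losses coming from the significant-cap reduction and from the constant-on-$B$ heuristic into the factor $K^{O(1)}$; since $A \sim R^{\varepsilon^6}$ is polynomial in $K = R^{\varepsilon^2}$ only through large powers, one must be a little careful, but the exponent stays $K^{O(1)}$ because each step loses at most a fixed power of $K$.

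Then I would sum over $B \subset B_R$. For each $B$ we have produced a pair $(\tau_1(B), \tau_2(B))$ of strongly separated caps with $\mu_{E_Mg}(B) \lesssim K^{O(1)} \||E_Mg_{\tau_1(B)}E_Mg_{\tau_2(B)}|^{1/2}\|_{L^p(B)}^p$; bounding the single pair by the full sum over strongly separated pairs and then summing the local $L^p(B)$ norms up to $L^p(B_R)$ (the balls $B$ finitely overlap) gives
\[
\|E_Mg\|_{HL^p(B_R)}^p = \sum_{B \subset B_R} \mu_{E_Mg}(B) \lesssim K^{O(1)} \sum_{\tau_1,\tau_2 \text{ strongly separated}} \| |E_Mg_{\tau_1}E_Mg_{\tau_2}|^{1/2}\|_{L^p(B_R)}^p,
\]
which is \eqref{eq:hypervsbil}. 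The main obstacle is the middle step: carefully checking that the significant caps not captured by the minimizing planes $V_1,\dots,V_A$ genuinely trigger case (2) of Lemma \ref{lem:geometricLemma} rather than case (1) — one needs that $\epsilon_0 < \frac{1}{4(n-1)}$ makes $K^{-\epsilon_0} \gg K_1^{-1} = R^{-\varepsilon^4}$ so that a cap inside the $K^{-\epsilon_0}$-neighborhood of an $m$-plane would indeed have been declared to lie "in" that plane in the sense $\mathrm{dist}(\tau,V_s) \leq K_1^{-1}$, forcing a contradiction with the optimality of the $V_s$ unless no single plane suffices. Keeping track of the precise powers of $K$ in the triangle/Hölder adjustments (Lemmas \ref{hbtriangle}, \ref{hbHolder}) so that they collapse into $K^{O(1)}$ is the remaining bookkeeping.
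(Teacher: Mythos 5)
Your overall strategy (apply the dichotomy of Lemma \ref{lem:geometricLemma} on each $K$-ball $B$, use the locally-constant heuristic to convert a product of $L^p(B)$ norms into the bilinear $L^p(B)$ norm, then sum over $B$) is the same as the paper's, and your second and third steps are fine modulo the $K^{O(1)}$ bookkeeping you acknowledge. The gap is exactly the middle step you flag as the main obstacle: your claim that alternative (1) of Lemma \ref{lem:geometricLemma} ``cannot occur, since the minimizing planes would already have absorbed those caps'' is not a valid deduction, and alternative (1) genuinely can occur. Replacing one of the minimizing planes $V_s$ by the plane $W$ furnished by alternative (1) is not cost-free: it releases the caps that were excluded only by $V_s$, so the $\max$ in \eqref{pmass} may increase and no contradiction with minimality arises. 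Moreover, if the set of significant caps not near any $V_s$ has at most one element, alternative (1) holds trivially and no strongly separated pair can be extracted from it at all, so the argument as written breaks down there.

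The correct treatment --- and what the paper does --- is not a contradiction but an absorption. Apply the dichotomy to all of $\mathcal{S}_p(B)$. If alternative (1) holds, i.e.\ every $\tau\in\mathcal{S}_p(B)$ lies within $O(K^{-\epsilon_0})\le K_1^{-1}$ of a single $m$-plane $V$, then taking $V_1=V$ as a competitor in the minimum in \eqref{pmass} gives $\mu_{E_Mg}(B)\le \max_{\tau\notin\mathcal{S}_p(B)}\Vert E_Mg_\tau\Vert_{L^p(B)}^p\le K^{-p(n-1+\varepsilon)}\Vert E_Mg\Vert^p_{HL^p(B)}$; since the left side equals $\Vert E_Mg\Vert^p_{HL^p(B)}$, this self-improving inequality shows such a ball contributes a term that can be absorbed (indeed it forces the broad norm on $B$ to vanish for large $K$). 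Only in the complementary case does one obtain two strongly separated caps $\tau_1,\tau_2\in\mathcal{S}_p(B)$, after which your pointwise bound and the summation over $B$ go through. One further small correction: the containment you need is $K^{-\epsilon_0}\le K_1^{-1}$ (equivalently $\epsilon_0\varepsilon^2\ge\varepsilon^4$, true for small $\varepsilon$), not $K^{-\epsilon_0}\gg K_1^{-1}$ as written.
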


\begin{proof}
For any given $B\in \mathcal{B}$, if there are two strongly separated $K^{-1}$-balls $\tau_1,\tau_2$ in $\mathcal{S}_{p}(B)$ then
\begin{align*}
\Vert E_Mg \Vert^p_{HL^p(B)}\lesssim& K^{O(1)}\|E_Mg_{\tau_1}\|^{p/2}_{L^p(B)}\|E_Mg_{\tau_2}\|^{p/2}_{L^p(B)} \\
\sim& K^{O(1)}\sum_{\tau_1,\tau_2\;\text{are\;strongly\;separated}}\| |E_Mg_{\tau_1} E_Mg_{\tau_2}|^{\frac{1}{2}}\|^p_{L^{p}(B)}.
\end{align*}
Here we used the fact that $\vert E_Mg_{\tau_i}\vert\;(i=1,2)$ is essentially a constant on any given $B\in \mathcal{B}$. Otherwise, by Lemma \ref{lem:geometricLemma}, there exists an $m$-dimensional affine space $V$ in $\mathbb{R}^{n-1}$ such that $dist(\tau, V)\leq K_1^{-1}$ for all $\tau \in \mathcal{S}_p(B)$. It follows
\begin{equation*}
\begin{split}
  \Vert E_Mg \Vert^p_{HL^p(B)}\leq& \min_{V_1=V,...,V_{A}}\max_{\tau \notin V_s\;\text{for\;all}\;1\leq s \leq A}\Vert E_Mg_{\tau} \Vert^p_{L^p(B)} \\
                              \leq& \max_{\tau \notin \mathcal{S}_p(B)}\Vert E_Mg_{\tau} \Vert^p_{L^p(B)} \\
                              \leq& K^{-p(n-1+\varepsilon)}\Vert E_Mg\Vert^p_{HL^p(B)}.
\end{split}
\end{equation*}
Thus, we obtain
\begin{equation*}
\begin{split}
  \Vert E_Mg \Vert^p_{HL^p(B)}\lesssim& K^{O(1)}\sum_{\tau_1,\tau_2\;\text{are\;strongly\;separated}}\| |E_Mg_{\tau_1} E_Mg_{\tau_2}|^{\frac{1}{2}} \|^p_{L^{p}(B)} \\
  +& K^{-p(n-1+\varepsilon)}\Vert E_Mg\Vert^p_{HL^p(B)}
\end{split}
\end{equation*}
for any $B\in \mathcal{B}$. Consequently,
\[\Vert E_Mg \Vert^p_{HL^p(B)}\lesssim K^{O(1)}\sum_{\tau_1,\tau_2\;\text{are\;strongly\;separated}}\| |E_Mg_{\tau_1} E_Mg_{\tau_2}|^{\frac{1}{2}} \|^p_{L^{p}(B)}\]
holds for any $B\in \mathcal{B}$. Summing over all the balls $B\in \mathcal{B}$, we get
\[\Vert E_Mg \Vert^p_{HL^p(B_R)}\lesssim K^{O(1)}\sum_{\tau_1,\tau_2\;\text{are\;strongly\;separated}}\| |E_Mg_{\tau_1} E_Mg_{\tau_2}|^{\frac{1}{2}} \|^p_{L^{p}(B_R)}\]
as desired.
\end{proof}

We begin to estimate $\Vert E_Mf \Vert^p_{HL^p(B_R)}$.
\begin{proposition}\label{strongermainprop0}
For every $\varepsilon >0$, there exists a positive constant $C_{\varepsilon}$ such that for any sufficiently large $R$
\[\Vert E_Mf \Vert^p_{HL^p(B_R)}\leq C_{\varepsilon}^p R^{p\varepsilon}\Vert f \Vert^{\frac{2n}{n-1}}_{L^2(B^{n-1}(0,1))}
\max_{d(\theta)=R^{-1/2}}\Vert f_{\theta} \Vert^{p-\frac{2n}{n-1}}_{L^{2}_{avg}(\theta)}\]
holds for $p\geq p_{broad}$ and any function $f\in L^p(B^{n-1}(0,1))$, where
\[\Vert f_{\theta} \Vert_{L^{2}_{avg}(\theta)}:=\Big(\frac{1}{\vert \theta \vert}\Vert f_{\theta} \Vert^2_{L^2(\theta)}\Big)^{1/2}\]
and $\vert \theta \vert$ denotes the Lebesgue measure of $\theta$.
\end{proposition}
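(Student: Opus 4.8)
The plan is to run the polynomial partitioning induction on the broad norm $\Vert E_Mf \Vert_{HL^p_A(B_R)}$ exactly along the lines of Guth's argument for the paraboloid, but feeding in the bilinear input coming from Lemma \ref{hypervsbil} together with Theorem \ref{thm:usefulbilinear0} to handle the base of the recursion. First I would set up the induction on $R$: the estimate holds trivially for $R$ of size $O(1)$ by H\"older, so assume it holds at all scales $r\le R/2$ (and in the weighted form recorded at the end of Section 2). Fix $f$ on $B_R$ and consider $\Vert E_Mf\Vert_{HL^p(B_R)}^p$. By Lemma \ref{hypervsbil}, up to a factor $K^{O(1)}$ this is controlled by $\sum_{\tau_1,\tau_2\ \mathrm{str.\ sep.}}\big\Vert |E_Mf_{\tau_1}E_Mf_{\tau_2}|^{1/2}\big\Vert_{L^p(B_R)}^p$; applying Theorem \ref{thm:usefulbilinear0} (valid since $p\ge p_{broad}\ge \tfrac{2(n+2)}{n}$ for $n\ge5$) this is $\lesssim K^{O(1)}R^{O(\varepsilon^2)}\prod_i\Vert f_{\tau_i}\Vert_{L^2}$, and the right-hand side is then bounded by what we want after interpolating the $L^2$ norms against the $L^2_{\mathrm{avg}}$ bound using the $R^{-1/2}$-cap decomposition and $L^2$-orthogonality. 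Actually, this bilinear bound alone is \emph{not} enough to reach the exponent $p_{broad}=\tfrac{2(n^2+2n-2)}{n^2-1}$ directly for all admissible $p$ when $R$ is large; it only works once $R$ has been driven down to a fixed power of $K$, so it is the terminal case.

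The main body is the polynomial partitioning step. I would apply the polynomial partitioning theorem to the function $\|E_Mf\|_{HL^p}^p$ on $B_R$, obtaining a degree-$D$ polynomial (with $D$ a small power of $R$, say $D=R^{\varepsilon^{10}}$) whose zero set $Z$ decomposes $B_R$ into $\sim D^n$ cells $O_i$ of diameter $\lesssim R/D$, plus the wall $W=N_{R^{1/2+\delta}}(Z)\cap B_R$. There are the usual two alternatives. In the \emph{cellular case}, most of the broad norm lives in the cells; each $O_i$ meets only $\lesssim D$ of the relevant wave packets (this is the place the polynomial-method transversality/B\'ezout bound enters, and for hyperbolic paraboloids one must check the nonsingularity of the relevant Gauss-map data — this is handled exactly as in Hickman--Iliopoulou / Barron, using that the second fundamental form of the hyperbolic paraboloid is nondegenerate). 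Summing the induction-on-scale hypothesis at scale $R/D$ over the $\sim D^n$ cells, and using $L^2$-orthogonality to pass the $\Vert f_\theta\Vert_{L^2_{\mathrm{avg}}}$-type localized norms through the cell decomposition, closes the induction provided the resulting power of $D$ is favorable, which it is for $p\ge p_{broad}$; the constant $C_\varepsilon$ absorbs the $D^{O(1)}$ loss because $D$ is only $R^{\varepsilon^{10}}$. In the \emph{algebraic case}, the mass concentrates on the wall $W$, and one further dichotomizes according to whether the wave packets are transverse to $Z$ or tangent to $Z$: the transverse tubes are few per unit ball and are again handled by the bilinear/induction machinery, while the tangential tubes are confined to a neighborhood of a lower-dimensional variety, and one iterates (a secondary polynomial partitioning, or directly the tangential estimate) — this is where the dimensional constraint and the exponent $\tfrac{2(n^2+2n-2)}{n^2-1}$ are actually produced, by optimizing the interplay between the number of tangential pieces ($\sim R^{1/2}$ from the wall thickness), the scale drop, and the $L^2\to L^p$ loss on each piece.

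The hardest step, I expect, is the tangential (algebraic) case and the bookkeeping that produces the precise exponent $p_{broad}$. Concretely: one must show the tangential wave packets associated to a cap $\tau$ are contained in $N_{R^{1/2+\delta}}(Z)$ and that, because the paraboloid's Gauss map restricted to a $K^{-1}$-cap has controlled image, the tangency forces the tube directions to cluster on a variety of dimension $\le n-2$; then an $L^2$-based estimate for such "tangential to a variety" collections (the analogue of Guth's/Hickman--Iliopoulou's tangential estimate, adapted to indefinite signature — here the hypothesis $m=\min\{n-1-m,m\}$ and the exclusion of low-dimensional affine concentration from Lemma \ref{lem:geometricLemma} are used) must be invoked, and the exponents balanced. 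Everything else — the triangle and H\"older inequalities for $HL^p_A$ (Lemmas \ref{hbtriangle}, \ref{hbHolder}), flat decoupling (Proposition \ref{prop:decoupling}), and the reduction of the narrow part to lower-dimensional restriction which gives $p_{narrow}$ — is routine and is only needed later when passing from the broad estimate of Proposition \ref{strongermainprop0} to Theorem \ref{localmainprop0} itself, not for the proof of this proposition. So for Proposition \ref{strongermainprop0} the two genuine ingredients are the cellular induction (straightforward once transversality is set up) and the tangential estimate (the real obstacle), glued by the bilinear reduction of Lemma \ref{hypervsbil}.
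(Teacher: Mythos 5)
Your overall skeleton --- induction on scales, polynomial partitioning of the broad norm into cells plus a wall, and the cellular/transversal/tangential trichotomy --- matches the paper, and your treatment of the cellular and transversal cases is essentially the paper's (the paper takes $D\sim\log R$ rather than $D=R^{\varepsilon^{10}}$, and the relevant counting fact is that each \emph{tube} meets at most $D+1$ cells, not that each cell meets $\lesssim D$ tubes, but these are cosmetic). The problem is that you identify the tangential case as ``the real obstacle'' and then do not actually give the argument for it: you gesture at ``a secondary polynomial partitioning, or directly the tangential estimate,'' a clustering of tube directions on a variety of dimension $\le n-2$, and an unspecified balancing of exponents. None of that is what produces $p_{broad}=\tfrac{2(n^2+2n-2)}{n^2-1}$ here, and as written it cannot be checked.

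The paper's tangential case rests on three concrete inputs that your proposal either omits or misplaces. First, the trivial $L^2$ wall estimate $\Vert E_Mf_{j,tang}\Vert_{HL^2(W\cap B_j)}\lesssim \rho^{1/2}\Vert f_{j,tang}\Vert_{L^2}$. Second, the bilinear reduction of Lemma \ref{hypervsbil} combined with Theorem \ref{thm:usefulbilinear0} is applied \emph{to the tangential piece $f_{j,tang}$ on each wall ball $B_j$}, giving an $L^2$-based bound at $p=\tfrac{2(n+2)}{n}$; interpolating this with the $L^2$ estimate gives \eqref{interpolation}. In your write-up the bilinear reduction instead appears at the top of the recursion as a ``terminal case,'' which is not how it is used and would not by itself reach $p_{broad}$. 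Third --- and this is the ingredient entirely absent from your proposal --- the Katz--Rogers Polynomial Wolff Axioms are invoked to show that $\mathrm{supp}\,f_{j,tang}$ meets only $\lesssim R^{\frac{n-2}{2}+O(\delta)}$ caps $\theta$ of radius $R^{-1/2}$, whence $\Vert f_{j,tang}\Vert_{L^2}\lesssim R^{-1/4+O(\delta)}\max_\theta\Vert f_\theta\Vert_{L^2_{avg}(\theta)}$. Substituting this into the interpolated bound is exactly what yields the exponent $\tfrac{n^2+2n-2}{2(n-1)}-\tfrac{(n+1)p}{4}$ in \eqref{improvedtangBj} and hence the threshold $p_{broad}$. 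Without naming this input (or an equivalent one) and carrying out the exponent computation, the proposition is not proved; the ``optimization'' you allude to is precisely the step that is missing.
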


\begin{proof}
We apply the polynomial partitioning technique of Guth \cite{Guth1} to the broad norm of $\vert \chi_{B_R}E_Mf \vert$. By Theorem 0.6 in \cite{Guth1}, for each degree $D\sim \log R$, we can find a nonzero polynomial $P$ of degree at most $D$ so that the complement of its zero set $Z(P)$ in $B_R$ is a union of $O(D^n)$ disjoint cells $U'_i$:
\[B_R \setminus Z(P)= \bigcup U'_i,\]
and the broad norm is roughly the same in each cell
\[\Vert E_Mf \Vert^p_{HL^p(U'_i)}\sim D^{-n}\Vert E_Mf \Vert^p_{HL^p(B_R)}.\]
The cells $U'_i$'s might have various shape. For the purpose of induction on scales, we would like to put it inside a smaller ball of radius $\frac{R}{D}$. To do so, it suffices to multiply $P$ by another polynomial $G$ of degree $nD$, and consider the cells cut-off by the zero set of $P\cdot G$. More precisely, let $G_j, j=1,...,n$ be the product of polynomials of degree $1$ whose zero sets are the hyperplanes parallel to the $x_j$-axis, of spacing $\frac{R}{D}$ and intersecting $B_R$. The degree of $G_j$ is at most $D$. Denote $\prod^n_{j=1}G_j$ by $G$. Let $Q=P\cdot G$ be the new partitioning polynomial, then we have a new decomposition of $B_R$:
\[B_R \setminus Z(Q)=\bigcup O'_i.\]
The zero set $Z(Q)$ decomposes $B_R$ into at most $O(D^n)$ cells $O'_i$ by the Milnor-Thom theorem \cite{Milnor1964, Thom1965}. A wave packet $E_Mf_{\theta,\upsilon}$ has negligible contribution to a cell $O'_i$ if its essential support $T_{\theta,\upsilon}$ does not intersect $O'_i$. To analyze how $T_{\theta,\upsilon}$ intersects a cell $O'_i$, we need to shrink $O'_i$ further. We define the wall $W$ as the $R^{\frac{1}{2}+\delta}$-neighborhood of $Z(Q)$ in $B_R$ and define the new cells as
\[O_i:=O'_i \setminus W.\]

In summary, we get
\[B_R = W \cup (\bigcup O_i),\]
\begin{equation}\label{decomposition}
\Vert E_Mf \Vert^p_{HL^p(B_R)}=\Vert E_Mf \Vert^p_{HL^p(W)}+\sum_{i}\Vert E_Mf \Vert^p_{HL^p(O_i)}
\end{equation}
and
\begin{equation}\label{equalcell}
\Vert E_Mf \Vert^p_{HL^p(O_i)}\lesssim D^{-n}\Vert E_Mf \Vert^p_{HL^p(B_R)}.
\end{equation}

We are in the cellular case if
\[\Vert E_Mf \Vert^p_{HL^p(B_R)}\lesssim \sum_{i}\Vert E_Mf \Vert^p_{HL^p(O_i)}.\]
We define
\[f_i:=\sum_{T_{\theta,\upsilon}\cap O_i \neq \emptyset} f_{\theta,\upsilon}.\]
Since the wave packets $E_Mf_{\theta,\upsilon}$ with $T_{\theta,\upsilon}\cap O_i=\emptyset$ have negligible contribution to
$\Vert E_Mf \Vert_{HL^p(O_i)}$,
\[\Vert E_Mf \Vert_{HL^p(O_i)}=\Vert E_Mf_i \Vert_{HL^p(O_i)}+RapDec(R)\Vert f \Vert_{L^2}.\]
Each tube $T_{\theta,\upsilon}$ intersects at most $D+1$ cells $O_i$. Consequently,
\[\sum_i \Vert f_i \Vert^2_{L^2}\lesssim D \Vert f \Vert^2_{L^2}.\]
By \eqref{equalcell} and the definition of the cellular case, there are at least $O(D^n)$ cells $O_i$ such that
\begin{equation}\label{icelldecomposition}
\Vert E_Mf \Vert^p_{HL^p(B_R)}\lesssim D^n \Vert E_Mf_i \Vert^p_{HL^p(O_i)}.
\end{equation}
Since there are $O(D^n)$ cells,
\begin{equation}\label{icell2}
\Vert f_i \Vert_{L^2}\lesssim D^{-\frac{n-1}{2}} \Vert f \Vert_{L^2}
\end{equation}
holds for most of the cells. If we are in the cellular case, we derive by induction on scales and \eqref{icell2} that
\begin{align*}
\Vert E_Mf \Vert^p_{HL^p(B_R)}\lesssim& D^n \Vert E_Mf_i \Vert^p_{HL^p(O_i)}\\
\lesssim& C^p_{\varepsilon}\big(\frac{R}{D}\big)^{p\varepsilon}D^n\Vert f_i \Vert^{\frac{2n}{n-1}}_{L^2}\max_{d(\tau)=(\frac{R}{D})^{-1/2}}\Vert f_{i,\tau} \Vert^{p-\frac{2n}{n-1}}_{L^{2}_{avg}(\tau)}\\
\lesssim& C^p_{\varepsilon}R^{p\varepsilon}D^{-p\varepsilon}\Vert f \Vert^{\frac{2n}{n-1}}_{L^2}\max_{d(\theta)=R^{-1/2}}\Vert f_{\theta} \Vert^{p-\frac{2n}{n-1}}_{L^{2}_{avg}(\theta)}.
\end{align*}
Since $D\sim \log R$, we take $R$ to be sufficiently large so that the induction closes.

If we are not in the cellular case, then
\[\Vert E_Mf \Vert_{HL^p(B_R)}\lesssim \Vert E_Mf \Vert_{HL^p(W)}.\]
We call it the algebraic case because the broad norm is concentrate on the neighborhood of an algebraic surface. Only the wave packets $E_Mf_{\theta,\upsilon}$ whose essential supports $T_{\theta,\upsilon}$ intersect $W$ contribute to $\Vert E_Mf \Vert_{HL^p(W)}$. Depending on how they intersect, we identify a tangential part, which consists of the wave packets tangential to $W$, and a transversal part, which consists of the wave packets intersecting $W$ transversely. We recall the definitions of the tangential tubes and the transversal tubes in \cite{Guth1} here. We cover $W$ by finitely overlapping balls $B_j$ of radius $\rho:=R^{1-\delta}$.

\begin{definition}
$\mathbb{T}_{j,tang}$ is the set of all tubes $T$ obeying the following two conditions.\\
(1)\;$T\cap W \cap B_j \neq \emptyset$.\\
(2)\;If $z$ is any non-singular point of $Z(P)$ lying in $2B_j \cap 10T$, then
\[Angle(\upsilon(T),T_zZ)\leq R^{-1/2+2\delta}.\]
\end{definition}

\begin{definition}
$\mathbb{T}_{j,trans}$ is the set of all tubes $T$ obeying the following two conditions.\\
(1)\;$T\cap W \cap B_j \neq \emptyset$.\\
(2)\;There exists a non-singular point $z$ of $Z(P)$ lying in $2B_j \cap 10T$ such that
\[Angle(\upsilon(T),T_zZ)> R^{-1/2+2\delta}.\]
\end{definition}

We write
\[\Vert E_Mf \Vert^p_{HL^p(W)}\lesssim \sum_{B_j}\Vert E_Mf_{j,tang} \Vert^p_{HL^p(W\cap B_j)}+\sum_{B_j}\Vert E_Mf_{j,trans} \Vert^p_{HL^p(W\cap B_j)}.\]

We are in the transversal case if
\[\Vert E_Mf \Vert^p_{HL^p(B_R)}\lesssim \sum_{B_j} \Vert E_Mf_{j,trans} \Vert^p_{HL^p(W\cap B_j)}.\]
The treatment of the transversal case is similar to the cellular case, which requires the following lemma in place of \eqref{icell2}.

\begin{lemma}[\cite{Guth2}]\label{transgeometriclemma}
Each tube $T$ belongs to at most $Poly(D)$ different sets $\mathbb{T}_{j,trans}$. Here $Poly(D)$ means a quantity bounded by a constant power of $D$.
\end{lemma}
By Lemma \ref{transgeometriclemma}, we have
\begin{equation}\label{transgeoineq}
\sum_{B_j}\Vert f_{j,trans} \Vert^2_{L^2}\lesssim Poly(D)\Vert f \Vert^2_{L^2}.
\end{equation}
If we are in the transversal case, we derive by \eqref{transgeoineq} and induction on scales
\begin{align*}
\Vert E_Mf \Vert^p_{HL^p(B_R)}\lesssim& \sum_{B_j}\Vert E_Mf_{j,trans} \Vert^p_{HL^p(W\cap B_j)}\\
\lesssim& C^p_{\varepsilon}\rho^{p\varepsilon}\sum_{B_j}\Vert f_{j,trans} \Vert^{\frac{2n}{n-1}}_{L^2}\max_{d(\tau)=\rho^{-1/2}}\Vert f_{j,trans,\tau} \Vert^{p-\frac{5}{2}}_{L^{2}_{avg}(\tau)}\\
\lesssim& C^p_{\varepsilon}\rho^{p\varepsilon}\big(\sum_{B_j}\Vert f_{j,trans} \Vert^2_{L^2}\Big)^{\frac{n}{n-1}}\max_{d(\theta)=R^{-1/2}}\Vert f_{\theta} \Vert^{p-\frac{2n}{n-1}}_{L^{2}_{avg}(\theta)}\\
\lesssim& C^p_{\varepsilon}R^{(1-\delta)p\varepsilon}(Poly(D))^{O(1)}\Vert f \Vert^{\frac{2n}{n-1}}_{L^2}\max_{d(\theta)=R^{-1/2}}\Vert f_{\theta} \Vert^{p-\frac{2n}{n-1}}_{L^{2}_{avg}(\theta)}.
\end{align*}
Recall that $D\sim \log R$. The induction closes for sufficiently large $R$.

We are in the tangential case if
\[\Vert E_Mf \Vert^p_{HL^p(B_R)}\lesssim \sum_{B_j} \Vert E_Mf_{j,tang} \Vert^p_{HL^p(W\cap B_j)}.\]
We claim that the following $L^2$ estimate holds.
\begin{equation}\label{L2energy}
\Vert E_Mf_{j,tang} \Vert_{HL^2(W\cap B_j)}\lesssim \rho^{1/2}\Vert f_{j,tang} \Vert_{L^2}.
\end{equation}
In fact, we have
\begin{align*}
\Vert E_Mf_{j,tang}\Vert^2_{HL^2(W\cap B_j)}\lesssim& \sum_{B\subset W\cap B_j}\Vert E_Mf_{j,tang} \Vert^2_{HL^2(B)}\\
\lesssim& \sum_{B\subset W\cap B_j}\max_{\tau}\Vert E_Mf_{j,tang,\tau} \Vert^2_{L^2(B)}\\
\lesssim& \sum_{B\subset W\cap B_j}\sum_{\tau}\Vert E_Mf_{j,tang,\tau} \Vert^2_{L^2(B)}\\
\lesssim& \sum_{\tau}\Vert E_Mf_{j,tang,\tau} \Vert^2_{L^2(W\cap B_j)}\\
\lesssim& \sum_{\tau}\rho \Vert f_{j,tang,\tau} \Vert^2_{L^2}\\
\lesssim& \rho \Vert f_{j,tang} \Vert^2_{L^2}.\\
\end{align*}
This verifies Claim \eqref{L2energy}. The Polynomial Wolff Axioms proved by Katz-Rogers \cite{KR19} say that $\mathrm{supp}\;f_{j,tang}$ lies in $\lesssim R^{\frac{n-2}{2}+O(\delta)}$ caps $\theta$ of radius $R^{-1/2}$. As a result, we have
\begin{equation}\label{keytangentestimate}
\Vert f_{j,tang} \Vert_{L^2}\lesssim R^{-1/4+O(\delta)}\max_{d(\theta)=R^{-1/2}}\Vert f_{\theta} \Vert_{L^{2}_{avg}(\theta)}.
\end{equation}

Applying Lemma \ref{hypervsbil} and Theorem \ref{thm:usefulbilinear0} to the function $E_Mf_{j,tang}$, we derive that
\begin{equation}\label{L2boundforhypertang}
\Vert E_Mf_{j,tang} \Vert^p_{HL^p(W\cap B_j)}\lesssim_{\varepsilon}K^{O(\varepsilon)}\rho^{p\varepsilon}\Vert f_{j,tang} \Vert^{p}_{L^2}
\end{equation}
holds for $p\geq \frac{2(n+2)}{n}$. Interpolating \eqref{L2energy} with \eqref{L2boundforhypertang}, we deduce that
\begin{equation}\label{interpolation}
\Vert E_Mf_{j,tang} \Vert^p_{HL^p(W\cap B_j)}\lesssim_{\varepsilon}\rho^{\frac{n+2}{2}-\frac{np}{4}+O(\varepsilon)}\Vert f_{j,tang} \Vert^p_{L^2}
\end{equation}
holds for $2\leq p \leq \frac{2(n+2)}{n}$. Applying \eqref{keytangentestimate} to the right-hand side of \eqref{interpolation}, we get
\begin{equation}\label{improvedtangBj}
\Vert E_Mf_{j,tang} \Vert^p_{HL^p(W\cap B_j)}
\lesssim_{\varepsilon}R^{\frac{n^2+2n-2}{2(n-1)}-\frac{(n+1)p}{4}+O(\varepsilon)}\Vert f_{j,tang} \Vert^{\frac{2n}{n-1}}_{L^2}\max_{d(\theta)=R^{-1/2}}\Vert f_{\theta} \Vert^{p-\frac{2n}{n-1}}_{L^{2}_{avg}(\theta)}.
\end{equation}
Note that $\Vert f_{j,tang} \Vert_{L^2}\lesssim \Vert f \Vert_{L^2}$. This together with \eqref{improvedtangBj} yields the desired bound for the tangential case whenever $p\geq p_{broad}$.

Combining the estimates in the cellular case, the transversal case and the tangential case, we conclude that Proposition \ref{strongermainprop0} holds.
\end{proof}

Now we show that Proposition \ref{strongermainprop0} implies Theorem \ref{localmainprop0}. By Proposition \ref{strongermainprop0}, we have
\begin{equation}\label{eq:hyperbroadputon}
\sum_{B\in \mathcal{B}}\min_{V_1,...,V_A} \max_{\tau \notin V_s}\Vert E_Mf_{\tau} \Vert^p_{L^p(B)}
\lesssim_{\varepsilon}R^{p\varepsilon}\Vert f \Vert^{\frac{2n}{n-1}}_{L^2}\Vert f \Vert^{p-\frac{2n}{n-1}}_{L^{\infty}}
\end{equation}
for $p\geq p_{broad}$, where $\tau \notin V_s$ is an abbreviation for $dist(\tau, V_s)>K_1^{-1},\;s=1,...,A$. By H\"{o}lder's inequality, we derive
\begin{equation}\label{eq:hyperbroadputon1}
\sum_{B\in \mathcal{B}}\min_{V_1,...,V_A} \max_{\tau \notin V_s}\Vert E_Mf_{\tau} \Vert^p_{L^p(B)}
\lesssim_{\varepsilon}R^{p\varepsilon}\Vert f \Vert^{2}_{L^2}\Vert f \Vert^{p-2}_{L^{\infty}},\;p\geq p_{broad}.
\end{equation}
For each $B\in \mathcal{B}$, we fix a choice of $V_1,...,V_A$ achieving the minimum above. Then, we can write
\[\Vert E_Mf \Vert^p_{L^p(B)}\lesssim K^{O(1)}\max_{\tau \notin V_s}\Vert E_Mf_{\tau} \Vert^p_{L^p(B)}+\sum^{A}_{s=1}\Vert \sum_{\alpha \in \mathcal{A}_s}E_Mf_{\alpha}\Vert^p_{L^p(B)},\]
where $\mathcal{A}_s$ denotes the minimal collection of $\alpha$ covering the $K_1^{-1}$-neighborhood of $V_s$. By Proposition \ref{prop:decoupling}, we then get
\[\Vert \sum_{\alpha \in \mathcal{A}_s}E_Mf_{\alpha}\Vert_{L^p(B)}\lesssim K_1^{m(\frac{1}{2}-\frac{1}{p})}(\log K_1)^{O(1)}\big(\sum_{\alpha \in \mathcal{A}_s} \|E_Mf_{\alpha}\|^{2}_{L^{p}(w_{B})}\big)^{\frac{1}{2}}.\]
By H\"{o}lder's inequality, we see that
\[\Vert \sum_{\alpha \in \mathcal{A}_s}E_Mf_{\alpha}\Vert^p_{L^p(B)}\lesssim K_1^{m(p-2)}(\log K_1)^{O(1)}\sum_{\alpha \in \mathcal{A}_s} \|E_Mf_{\alpha}\|^{p}_{L^{p}(w_{B})}.\]
In summary, we obtain
\[\Vert E_Mf \Vert^p_{L^p(B)}\lesssim K^{O(1)}\max_{\tau \notin V_s}\Vert E_Mf_{\tau} \Vert^p_{L^p(B)}+AK_1^{m(p-2)}(\log K_1)^{O(1)}\sum_{\alpha} \|E_Mf_{\alpha}\|^{p}_{L^{p}(w_{B})}.\]
Since
\[\sum_{B\in \mathcal{B}}w_{B}\lesssim w_{B_R},\]
we can sum over $B\in \mathcal{B}$ to conclude that
\begin{equation}\label{eq:nice}
\Vert E_Mf \Vert^p_{L^p(B_R)}\lesssim K^{O(1)}\Vert E_Mf \Vert^p_{HL^p(B_R)}
+AK_1^{m(p-2)}(\log K_1)^{O(1)}\sum_{\alpha}\|E_Mf_{\alpha}\|^{p}_{L^{p}(w_{B_R})}.
\end{equation}
By recaling and induction on scales, we obtain
\[\|E_Mf_{\alpha}\|^{p}_{L^{p}(w_{B_R})}\lesssim_{\varepsilon}R^{p\varepsilon}K_1^{-p\varepsilon}K_1^{2n-(n-1)p}\Vert f_{\alpha} \Vert^p_{L^p}.\]
Therefore, the second term on the right-hand side of \eqref{eq:nice} is
\[\lesssim AK_1^{2(n-m)-p(n-m-1)-p\varepsilon}(\log K_1)^{O(1)}\Vert f \Vert^p_{L^p}.\]
Recall that $K_1=R^{\varepsilon^4}$ and $A \sim R^{\varepsilon^6}$. The induction closes when $p\geq p_{narrow}$.
This together with \eqref{eq:hyperbroadputon1} gives
\[\Vert E_Mf \Vert_{L^p(B_R)}\lesssim_{\varepsilon}R^{O(\varepsilon)}\Vert f \Vert^{\frac{2}{p}}_{L^2}\Vert f \Vert^{1-\frac{2}{p}}_{L^{\infty}}\]
for $p\geq \bar{p}=\max\{p_{broad}, p_{narrow}\}$.

We now prove Theorem \ref{localmainprop0} by the argument on page 7 of \cite{Kim}. We deduce that the restricted stronger-type estimate
\begin{equation}\label{eq:restrictedstronger}
\Vert E_M\chi_F \Vert_{L^p(B_R)}\lesssim_{\varepsilon}R^{O(\varepsilon)}\vert F \vert^{1/p}
\end{equation}
holds for $p\geq \bar{p}$ and any set $F\subset B^{n-1}(0,1)$. Interpolating \eqref{eq:restrictedstronger} with the trivial $L^{\infty}-L^{\infty}$ estimate, we deduce that
\begin{equation}\label{eq:333}
\Vert E_Mf \Vert_{L^p(B_R)}\lesssim_{\varepsilon}R^{O(\varepsilon)}\Vert f \Vert_{L^p(B^{n-1}(0,1))}
\end{equation}
holds for $p> \bar{p}$ and any function $f\in L^p(B^{n-1}(0,1))$. By H\"{o}lder's inequality, we derive that \eqref{eq:localmainprop0} holds for $p\geq \bar{p}$, which completes the proof of Theorem \ref{localmainprop0}.

\vskip 0.12in

%\subsection*{Acknowledgements}

%\subsection*{Declarations}
%$\bullet$ Conflict of interest: There is no conflict of interest.

\begin{center}

\end{center}


\begin{thebibliography}{99}


\bibitem{Barron} A. Barron, Restriction estimates for hyperbolic paraboloids in higher dimensions via bilinear estimates, Rev. Mat. Iberoam., 38 (2022), no. 5,  1453-1471.


\bibitem{Bo91} J. Bourgain, Besicovitch-type maximal operators and applications to Fourier analysis, Geometric and Functional Analysis, 22(1991), 147-187.


\bibitem{BD15} J. Bourgain and C. Demeter, The proof of the $\ell^2$ decoupling conjecture, Ann. of Math. (2) 182(2015), 351-389.


\bibitem{BG} J. Bourgain and L. Guth, Bounds on oscillatory integal operators based on multilinear estimates, Geometric and Functional Analysis, 21(2011), 1239-1295.


\bibitem{BMV20a} S. Buschenhenke, D. M\"{u}ller and A. Vargas, Fourier restriction for smooth hyperbolic 2-surfaces, Math. Ann., 387(2023), 17-56.


\bibitem{CL17} C. Cho and J. Lee, Improved restriction estimate for hyperbolic surfaces in $\mathbb{R}^3$, J. Funct. Anal., 273(2017), 917-945.


\bibitem{D19} C. Demeter, On the restriction theorem for paraboloid in $\mathbb{R}^4$, Colloq. Math., 156(2019), 301-311.


\bibitem{F70} C. Fefferman, Inequalities for strongly singular convolution operators, Acta Math., 124(1970), 9-36.


\bibitem{GO20} S. Guo and C. Oh, A restriction estimate for surfaces with negative Gaussian curvatures, Peking Mathematical Journal, 7(2024), 155-202.


\bibitem{GuoWangZhang22} S. Guo, H. Wang and R. Zhang, A dichotomy for H\"{o}rmander type oscillatory integal operators, arXiv:2210.05851.


\bibitem{Guth1} L. Guth, A restriction estimate using polynomial partitioning, J. Amer. Math. Soc., 29(2016),
371-413.


\bibitem{Guth2} L. Guth, Restriction estimates using polynomial partitioning II, Acta. Math., 221(2018), 81–142.


\bibitem{HI} J. Hickman and M. Iliopoulou, Sharp $L^p$ estimates for oscillatory integral operators of arbitrary signature, Math. Z., 301(2022), 1143-1189.


\bibitem{HR} J. Hickman and M. Rogers, Improved Fourier restriction estimates in higher dimensions, Cambridge Journal of Mathematics, 7(2019), 219-282.


\bibitem{HZ} J. Hickman and J. Zahl, A note on Fourier restriction and nested polynomial Wolff axioms, J. d'Analyse Math., 152(2024), 19-52.



\bibitem{KR19} N. H. Katz and K. M. Rogers, On the Polynomial Wolff Axioms, Geometric and Functional Analysis, 28(2018),  1706-1716.



\bibitem{Kim} J. Kim, Some remarks on Fourier restriction estimates, arXiv:1702.01231.



\bibitem{Lee06} S. Lee, Bilinear restriction estimates for surfaces with curvatures of different signs, Trans. Amer. Math. Soc. 358(2006),3511-3533.


\bibitem{LiZheng23} Z. Li and J. Zheng, Restriction estimates for one class of hypersurfaces with vanishing curvature in $\mathbb{R}^n$, arXiv:2312.08633.


\bibitem{Milnor1964} J. Milnor, On the betti numbers of real varieties, Proc. Amer. Math. Soc., 15(1964), 275-280.


\bibitem{Stein} E. M. Stein, Some problems in harmonic analysis, Harmonic analysis in Euclidean spaces, Proceedings of the Symposium in Pure and
Applied Mathematics, 35 (1979), Part I, 3-20.


\bibitem{Stova19} B. Stovall, Scale invariant Fourier restriction to a hyperbolic surface, Anal. PDE 12(2019), 1215-1224.


\bibitem{Tao99} T. Tao, The Bochner-Riesz conjecture implies the restriction conjecture, Duke Math. J., 96(1999), 363-375.


\bibitem{Tao03} T. Tao, A sharp bilinear restriction estimate for paraboloids, Geometric and Functional Analysis,
13(2003), 1359-1384.


\bibitem{Thom1965} R. Thom, sur l'homologie des vari\'{e}t\'{e} alg\'{e}briques, Differential and Combinatorial Topology (A Symposium in Honor of Marston Morse), 1965, 255-265.


\bibitem{Vargas05} A. Vargas, Restriction theorems for a surface with negative curvature, Math.Z., 249(2005), 97-111.


\bibitem{Wang} H. Wang, A restriction estimate in $\mathbb{R}^3$ using brooms, Duke Mathematical Journal,
171(2022), 1749-1822.


\bibitem{WW22} H. Wang and S. Wu, An improved restriction estimate in $\R^3$, arXiv:2210.03878.


\bibitem{Z74} A. Zygmund, On Fourier coefficients and transforms of functions of two variables, Studia Math., 50(1974), 189-201.



\end{thebibliography}
\end{document}